\newtheorem{proposition}{Proposition}
\newtheorem{conjecture}{Conjecture}
\newtheorem*{conjecture*}{Conjecture}
\newtheorem{question}{Question}
\newtheorem{lemma}{Lemma}
\newtheorem{theorem}{Theorem}
\newtheorem*{theorem*}{Theorem}
\theoremstyle{definition}
\newtheorem{definition}{Definition}
\newtheorem*{definition*}{Definition}
\newtheorem{example}{Example}
\theoremstyle{remark}
\newcommand{\ul}[1]{\underline{#1}}
\newcommand{\ol}[1]{\overline{#1}}
\newcommand{\mc}[1]{\mathcal{#1}}
\newcommand{\wt}[1]{\widetilde{#1}}
\def\K{\mathbb{K}}
\def\indlim{\lim_{\longrightarrow}}
\def\Z{\mathbb{Z}}
\def\QQ{\mathbb{Q}}
\def\A{\mathbb{A}}
\def\Ga{\mathbb{G}_a}
\def\Gm{\mathbb{G}_m}
\def\la{\langle}
\def\ra{\rangle}
\def\ss{\subset}
\def\sse{\subseteq}
\def\T{\mathbb{T}}
\def\L{\mathcal{L}} 
\def\vk{\varkappa}
\def\pd{\partial}
\def\G{\mc{G}}
\def\ve{\varepsilon}
\DeclareMathOperator{\Aut}{Aut}
\DeclareMathOperator{\SAut}{SAut}
\DeclareMathOperator{\GCD}{gcd}
\DeclareMathOperator{\Ad}{Ad}
\DeclareMathOperator{\Der}{Der}
\DeclareMathOperator{\ad}{ad}
\DeclareMathOperator{\LND}{LND}
\begin{document}
		\title[Infinite transitivity for automorphism groups of the affine plane]{Infinite transitivity for automorphism groups of the~affine plane}
	\author{Alisa Chistopolskaya}
	\author{Gregory Taroyan}
	\address{Higher School of Economics, Faculty of Mathematics, Usacheva street, 6, Moscow, 119048 Russia}
	\email{achistopolskaya@gmail.com}
	\address{Higher School of Economics, Faculty of Mathematics, Usacheva street, 6, Moscow, 119048 Russia}
	\email{tgv628@yahoo.com}
	
	\subjclass[2010]{14R20, 14R10}
	
	\keywords{affine plane, toric variety, group action, one-parameter subgroup, Demazure root, infinite transitivity}
	\thanks{The first author was supported by the grant RSF 19-11-00172.}
	\begin{abstract}
	This paper is dedicated to the problem of infinite transitivity for algebraically generated automorphism groups of the affine plane. We provide a necessary and sufficient condition of infinite transitivity for a large family of subgroups generated by one-parameter additive groups.
	\end{abstract}
	\maketitle
\section{Introduction}
 
This paper is devoted to the study of the automorphism group $\Aut(\A^2)$ of the affine plane~$\A^2$ over an algebraically closed field $\K$ of characteristic zero. 
One approach to the study of this group is to look for "elementary" subgroups in it. The first type of groups that comes to mind is that of \(\Gm\)-groups. By definition \(\Gm\)-groups are subgroups of the group~\({\Aut(\A^2)}\) that are isomorphic to the algebraic group of invertible elements in the ground field. Actions of these groups are very important and very well studied. The second type of nice subgroups one may think about is that of \(\Ga\)-groups. That is subgroups in \(\Aut(\A^2)\) that are isomorphic to the additive group of the ground field. Although seemingly even more simple than \(\Gm\)-groups, \(\Ga\)-groups and their actions turn out to be far more complicated.
Below we describe two families \(\Ga\)-actions on the plane~\(\A^2\). These two families are the main subjects of study in this work.
\par For any $a,b\in\Z_{\ge 0}$, define the following one-parameter subgroups of $\Aut(\A^2)$: 
$$H_a\colon(x,y)\mapsto (x+\alpha y^a,y)\;\;\mbox{and}\;\; K_b\colon(x,y)\mapsto (x, y+\beta x^b),\;\; \alpha , \beta \in\K\,.$$
\par Let \(\SAut(\A^2)\) be the subgroup of \(\Aut(\A^2)\) generated by all one-parameter subgroup. Importance of families \(H_*,K_*\) now stems from the fact that they generate the whole group \(\SAut(\A^2)\) of the plane by Jung--Van der Kulk theorem.
\par
The importance of transitive group actions in geometry is well understood. One may ask when an action of a group is not only transitive, but is "very" transitive. In more concrete terms an action of a group \(G\) on a set \(S\) is called \(m\)-transitive for some positive integer \(m\) if the diagonal action on the configuration space of \(m\) pairwise distinct ordered points in \(S\) is transitive. In these terms transitive actions become \(1\)-transitive actions. Now it is natural to define "very" transitive actions to be those which are \(m\)-transitive for an arbitrary \(m\ge 1.\) Such actions are called {\it infinitely transitive}. 
\par In $2018$, D.Lewis, K.Perry, and A.Straub proved \cite{LPS} that the subgroup of $\Aut(\A^2)$ generated by the pair $H_1, K_2$ acts on $\A^2\setminus\{0\}$ infinitely transitively.
This result is the answer to the question of I.Arzhantsev, K.Kuyumzhiyan, and M.Zaidenberg \cite{AKZ}. It has been proven in \cite{AKZ} that if $ab \ne 2$, the subgroup $G=\langle H_a, K_b\rangle$ of $\Aut(\A^2)$ can not act infinitely transitively. 
We obtain an easier way to prove the result from the work \cite{LPS}, and also generalize these facts to any number of the above one-parameter subgroups.
We consider the group~$G$ generated by a finite number of one-parameter subgroups of the form $H_a$ and $K_b$ and give an equivalent condition to the infinite transitivity of the group $G$ in combinatorial terms. 
\par This paper is organized as follows. In section \ref{sec:prel} we give a brief summary of the results needed. In particular in subsection \ref{subsec:toric} we describe in detail the standard structure of an affine toric variety on the plane $\A^2.$ Subsection \ref{subsec:hom_der} is dedicated to the homogeneous derivations of the coordinate algebra of the affine plane. In subsection \ref{subsec:Dem_roots} we recall combinatorial description of LND in \(\K[x,y].\) In subsection \ref{subsec:two_res} we collect important properties of LNDs in \(\K[x,y].\) Finally in subsection \ref{subsec:ind_aut} we deal with the structure of an Ind-group~$\Aut(\A^2).$ Results of this subsection are at the cornerstones of our work. These results allow us to determine the closure for subgroups of $\Aut(\A^2)$ generated by one-parametric subgroups. Section \ref{sec:main} contains our main results. In section \ref{sec:questions} we give several examples of our results as well as formulate two conjectures generalizing Theorem~\ref{thm2}. Proof of these conjectures is the main goal of our planned subsequent research. Additionally, in the same section we pose a question similar in nature to our results, but for non-affine toric varieties. Answering this question could help develop the theory in a broader context.
\subsection*{Summary of results}
The main results of this paper are the following theorems:
\begin{theorem*}[{\ref{H1}}]
	 Let \(m\) be a positive integer. Then the group $\la H_1,K_{d_1},\ldots,K_{d_m}\ra$ acts on $\A^2\setminus\{0\}$ infinitely transitively if and only if $\GCD(d_1-1,\ldots, d_m-1)=1.$
\end{theorem*}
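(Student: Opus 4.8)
The necessity of the arithmetic condition admits a quick, self-contained proof that I would give first. Suppose $\GCD(d_1-1,\dots,d_m-1)\ne 1$. Then this number is either $0$ (exactly when every $d_i=1$) or an integer $\ge 2$, so one may pick $\ell\ge 2$ with $\ell\mid d_i-1$ for all $i$ (take $\ell$ to be the $\GCD$ if it is $\ge 2$, and $\ell=2$ otherwise). Let $\mu_\ell\ss\Gm$ act on $\A^2$ by homotheties $(x,y)\mapsto(\zeta x,\zeta y)$, $\zeta^\ell=1$. A direct computation shows that each $H_1(\alpha)$ commutes with every homothety, while $K_{d_i}(\beta)$ commutes with $(x,y)\mapsto(\zeta x,\zeta y)$ exactly when $\zeta^{\,d_i-1}=1$; since $\ell\mid d_i-1$ this holds for every $\zeta\in\mu_\ell$. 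Hence $\mu_\ell$ centralizes $G:=\la H_1,K_{d_1},\dots,K_{d_m}\ra$. As $\mu_\ell$ acts freely on $\A^2\setminus\{0\}$, for any $p\ne 0$ and a primitive $\zeta\in\mu_\ell$ the points $p$ and $\zeta p$ are distinct, and $g(\zeta p)=\zeta\,g(p)$ for every $g\in G$; thus the $G$-orbit of the pair $(p,\zeta p)$ lies in the proper subset $\{(r,\zeta r):r\in\A^2\setminus\{0\}\}$ of the space of ordered pairs of distinct points. So $G$ is not even $2$-transitive, a fortiori not infinitely transitive.

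For the converse, assume $\GCD(d_1-1,\dots,d_m-1)=1$. The plan is to deduce infinite transitivity from an Arzhantsev--Flenner--Kaliman--Kutzschebauch--Zaidenberg type flexibility criterion: it suffices to produce a \emph{saturated} set $\mc N$ of locally nilpotent derivations of $\K[x,y]$ (closed under conjugation by $\la\exp\mc N\ra$ and under multiplication by elements of the corresponding kernels) such that $\la\exp\mc N\ra$ is contained in the closure $\ol G$ of $G$ in the Ind-group $\Aut(\A^2)$ and the velocity vectors $\{\delta(q):\delta\in\mc N\}$ span $T_q\A^2$ at every $q\in\A^2\setminus\{0\}$; then $\ol G$ acts infinitely transitively on $\A^2\setminus\{0\}$. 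Infinite transitivity then transfers to $G$ itself: being generated by finitely many $\Ga$-subgroups, $G$ has each of its orbits on a configuration space equal to a countable union of constructible pieces which is dense in, and hence equal to, the irreducible full orbit of $\ol G$ --- this is where the Ind-group results of Subsection~\ref{subsec:ind_aut} are used.

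Building $\mc N$ --- equivalently, computing $\ol G$ --- is the heart of the matter and the step I expect to be the main obstacle. The tools are limit identities coming from the Ind-topology: conjugating $K_b$ by $H_1(\alpha)$ and rescaling the parameter as $\alpha\to\infty$ puts $H_b$ into $\ol G$; conjugating $H_1$ by $K_b(\beta)$ and rescaling as $\beta\to\infty$ puts $K_{2b-1}$ into $\ol G$; and conjugating $H_a$ by $K_b$ (resp.\ $K_b$ by $H_a$) and rescaling yields $K_{ab+b-1}$ (resp.\ $H_{ab+a-1}$). Every such move keeps the quantity $(\text{index}-1)$ inside the subgroup of $\Z$ generated by $d_1-1,\dots,d_m-1$ --- which re-explains why $\GCD=1$ is forced --- and a B\'ezout / numerical-semigroup bookkeeping shows that when this $\GCD$ equals $1$ one reaches, inside $\ol G$, enough root subgroups $U_{(-1,j)}$ and $U_{(i,-1)}$ that (together with their conjugates and kernel multiples) they assemble into a saturated family whose velocities span every tangent space of $\A^2\setminus\{0\}$; specialized to $m=1$, $d_1=2$ this gives a short proof of the Lewis--Perry--Straub theorem. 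The two delicate points are (i) controlling the degrees along these one-parameter families so that the Ind-limits genuinely exist, and (ii) checking that the resulting set of derivations is closed under multiplication by kernel elements --- the property that the naive family of root subgroups fails and that the flexibility criterion requires; it is in dealing with (ii) that the combinatorial hypothesis does its essential work.
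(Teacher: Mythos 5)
Your necessity argument is correct and coincides with the paper's: your invariant set $\{(p,\zeta p)\}$ for a primitive $\ell$-th root of unity $\zeta$ with $\ell\mid d_i-1$ is exactly the set $S=\{(P,\omega P)\}$ used in the proof of Theorem~\ref{H1} (your explicit treatment of the degenerate case where all $d_i=1$ is a harmless addition). The first half of your sufficiency argument also matches the paper in substance: your ``limit identities'' are made rigorous there by taking $T$-principal parts of $\Ad_{K_a}(\Ad_{K_d}(H_1))$ via Proposition~\ref{principal-part} (Lemmata~\ref{Grisha'sLemma} and~\ref{firstPart}), the numerical-semigroup bookkeeping gives $K_n\sse\ol{G}$ for all $n>n_0$, and Proposition~\ref{lem:many-points} transfers infinite transitivity from $\ol{G}$ back to $G$.

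The gap is in your last step. You propose to finish by exhibiting a saturated set $\mc{N}$ of LNDs with $\la\exp\mc{N}\ra\sse\ol{G}$ and invoking the AFKKZ criterion, and you correctly identify closure under replicas as the obstruction --- but you give no construction, and the natural candidates fail. What your limit identities actually place in $\ol{G}$ is $H_1$, the groups $K_n$ for $n\ge n_0$ (hence $\exp(\K R(x)\partial/\partial x_2)$ for $x^{n_0}\mid R$), and $H_n$ for $n$ in a certain additively generated set of indices; saturation would additionally require \emph{all} replicas $f(y)\,y\,\partial/\partial x_1$, $f\in\K[y]$, of the derivation of $H_1$ to lie in $\ol{G}$, i.e.\ $H_j\sse\ol{G}$ for every $j\ge 1$. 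For $\la H_1,K_{d_1},\ldots,K_{d_m}\ra$ with all $d_i$ large this manifestly is not delivered by any of your moves (e.g.\ $H_2$ is unreachable from $H_1$ and $K_{100},K_{101}$ by principal-part conjugations), and the hypothesis $\GCD(d_1-1,\ldots,d_m-1)=1$ governs only the $K$-side exponents, so it cannot repair this. The paper avoids the saturation criterion altogether: Lemma~\ref{secondPart} proves directly and elementarily that $\la H_t,K_s^\infty\ra$ acts infinitely transitively on $\A^2\setminus\{0\}$, by first separating the $x$-coordinates of the given points with an element of $H_t$ and then using Hermite interpolation to produce polynomials $S(x)$ divisible by $x^s$ with prescribed values at those $x$-coordinates. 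Replacing your appeal to saturation by such a direct interpolation argument is what is needed to close the proof.
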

\begin{theorem*}[{\ref{thm2}}] Let \(m,s\) be positive integers. Then the group $G=\la H_{c_1},\ldots, H_{c_s},K_{d_1},\ldots,K_{d_m}\ra$ acts on $\A^2\setminus\{0\}$ infinitely transitively if and only if
    $$\Z\left\la \begin{pmatrix}-1 & c_1\end{pmatrix},\ldots,\begin{pmatrix}-1 & c_s\end{pmatrix},\begin{pmatrix}d_1& -1\end{pmatrix},\ldots,\begin{pmatrix}d_m & -1\end{pmatrix}\right\ra=\Z^2.$$
\end{theorem*}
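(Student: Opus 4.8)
The plan is to recast the problem in terms of Demazure roots for the standard toric structure on $\A^2$ (with torus $\T=\Gm^2$ acting by $(t_1,t_2)\cdot(x,y)=(t_1x,t_2y)$), as recalled in Subsections~\ref{subsec:toric}--\ref{subsec:Dem_roots}. The Demazure roots of $\A^2$ are precisely the lattice points $(-1,k)$ and $(k,-1)$ with $k\ge0$, and the corresponding root subgroups are $U_{(-1,k)}=H_k$ (with LND $y^k\pd_x$, of $\T$-weight $(-1,k)$) and $U_{(k,-1)}=K_k$ (with LND $x^k\pd_y$, of weight $(k,-1)$). Put $R=\{(-1,c_1),\dots,(-1,c_s),(d_1,-1),\dots,(d_m,-1)\}$ and $L=\Z R\sse\Z^2=M$, so that the condition of the theorem reads $L=M$. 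Throughout we assume $c_i,d_j\ge1$, so that every generator, hence $G$, fixes the origin and $G$ indeed acts on $\A^2\setminus\{0\}$.

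For necessity I would establish the contrapositive: if $L\ne M$ then $G$ is not $2$-transitive on $\A^2\setminus\{0\}$. Choose a nonzero homomorphism $\psi\colon M\to A$ with $L\sse\ker\psi$, where $A=\Z$ (this happens only in the case $s=m$, all $c_i=d_j=1$) or $A=\Z/p$ for a prime $p$. Equip $\K[x,y]$ with the $A$-grading for which $\deg x=\psi(1,0)$ and $\deg y=\psi(0,1)$. Since the root of each generator lies in $R\sse\ker\psi$, each generator sends $A$-homogeneous elements to $A$-homogeneous elements of the same degree, so $G$ preserves this grading; equivalently, $G$ centralises the nontrivial diagonalisable subgroup $\Gamma\sse\T$ dual to $A$ (a subtorus, or $\mu_p$). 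Since $\Gamma$ acts faithfully, a generic $\Gamma$-orbit in $\A^2\setminus\{0\}$ has at least two points and there are infinitely many orbits; pick distinct $P,P'$ in one such orbit and $Q$ in another. Any $g\in G$ commutes with $\Gamma$, so $g(\Gamma P)=\Gamma(gP)$; hence $g(P)=P$ forces $g(P')\in\Gamma P'=\Gamma P$, which does not contain $Q$. So no $g\in G$ sends $(P,P')$ to $(P,Q)$: $G$ is not $2$-transitive, and in particular not infinitely transitive.

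For sufficiency, assume $L=M$. Using the description of the closures of one-parameter-generated subgroups of $\Aut(\A^2)$ from Subsection~\ref{subsec:ind_aut}, together with the fact that such a subgroup has the same orbits as its ind-closure (applied here to the diagonal actions on the $(\A^2\setminus\{0\})^k$), it suffices to prove that the closure $\ol G$ acts infinitely transitively on $\A^2\setminus\{0\}$. I would do this by exhibiting inside $\ol G$ a subgroup already known to be infinitely transitive: namely $\la H_1,K_2\ra$, which is infinitely transitive on $\A^2\setminus\{0\}$ by Theorem~\ref{H1} (since $\GCD(2-1)=1$; this is the theorem of \cite{LPS}). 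Because the roots $(-1,1)$ and $(2,-1)$ belong to $L=M$, the inclusion $\la H_1,K_2\ra\sse\ol G$ is a special case of the key assertion below.

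The core of the proof is the statement: \emph{for every Demazure root $\rho\in L$ whose root subgroup fixes the origin, $U_\rho\sse\ol G$}; when $L=M$ this gives $H_k,K_k\in\ol G$ for all $k\ge1$, matching (on the other side) the obstruction of the necessity part. I would prove it by generating root subgroups in $\ol G$ inductively, starting from $H_{c_i},K_{d_j}$: by the ind-group structure of $\Aut(\A^2)$ (Subsection~\ref{subsec:ind_aut}), limits of products of flows of root subgroups already contained in $\ol G$, and of their conjugates, again lie in $\ol G$; and by a one-parameter computation in the spirit of the $\mathrm{SL}_2$-case (exploiting the properties of LNDs on $\K[x,y]$ collected in Subsection~\ref{subsec:two_res}) one shows that suitable such limits are themselves root subgroups, thereby enlarging the set of roots already realised in $\ol G$ and, when the initial roots span $M$, eventually reaching all Demazure roots fixing the origin. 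Pinning down which of these limits converge in $\ol G$ and checking the cancellations that isolate a genuine root subgroup is, I expect, the main obstacle; once it is in place, the reduction above and the transfer of $m$-transitivity from $\ol G$ to $G$ finish the argument.
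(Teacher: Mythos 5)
Your necessity argument is correct and is essentially the paper's own: the invariant set $S$ in the proof of Theorem~\ref{thm2} is exactly the graph of a generator of your diagonalisable group $\Gamma$, so ``$G$ centralises $\Gamma$'' and ``$G$ preserves $S$'' are the same statement. Packaging it through a homomorphism $\psi\colon M\to A$ killing $L$ is a clean reformulation, and the transfer of $m$-transitivity between $G$ and $\ol{G}$ via Proposition~\ref{lem:many-points} is also correctly invoked.

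The sufficiency half, however, has a genuine gap, located precisely at the step you defer as ``the main obstacle''. Your key claim is that every Demazure root lying in the \emph{lattice} $L=\Z R$ yields a root subgroup of $\ol{G}$. But the only mechanism available for manufacturing new root subgroups in $\ol{G}$ --- iterated adjoint actions of the given root subgroups followed by extraction of homogeneous parts (Lemmata~\ref{Aliska'sLemma} and~\ref{lemma:closures}) --- produces roots of the form $\ve+e_1+\cdots+e_k$, i.e.\ elements of the \emph{monoid} $\Z_{\ge 0}\la R\ra$; it can never realise a root whose expression in terms of $R$ requires negative coefficients. Accordingly, Proposition~\ref{lemma:commutators} is a cone statement, not a lattice statement, and your claim is strictly stronger with nothing offered to bridge the difference. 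This is not a cosmetic issue: it breaks your reduction. Take $G=\la H_2,K_1\ra$, with roots $(-1,2)$ and $(1,-1)$. These generate $M$, so the theorem applies, yet $(-1,1)$ does not even lie in the rational cone they span, so there is no reason to expect $H_1\sse\ol{G}$, and the proposed route through $\la H_1,K_2\ra\sse\ol{G}$ stalls. The workable (and the paper's) argument avoids the lattice claim entirely: one checks that $L=M$ with $s,m\ge 1$ forces the monoid $\Z_{\ge 0}\la R\ra$ to contain $(n,-1)$ for all sufficiently large $n$, so that $K_{s_0}^\infty\sse\ol{G}$ for some $s_0$ by Proposition~\ref{lemma:commutators}; one then proves directly, by Hermite interpolation, that $\la H_t,K_{s_0}^\infty\ra$ acts infinitely transitively on $\A^2\setminus\{0\}$ (Lemma~\ref{secondPart}), and transfers back to $G$. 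You do not need all roots of $L$ in $\ol{G}$, and you should not try to obtain them.
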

   
These theorems generalize results of \cite{LPS} and provide an effective criterion of infinite transitivity for an infinite family of algebraically generated automorphism groups.\hfill\break
\par We are grateful to Ivan Arzhantsev for his support and Sergey Gayfullin for fruitful discussions and valuable comments.
\section{Preliminaries}\label{sec:prel}
\subsection{Toric variety $\A^2$}\label{subsec:toric}

Let $\K$ be an algebraically closed field of characteristic zero and~$\A^2$~be the two-dimensional affine space over $\K$.
Two-dimensional torus~$\T\simeq
(\K^\times)^2$~acts on~$\A^2$~via the
formula
$$
    (t_1,t_2)\circ (x,y)=(t_1 x, t_2 y),\quad t_1,t_2\in \K^\times,\; x,y\in \K.
$$
We call this torus action {\it standard}. Let $M$ be the character lattice of the torus $\T$ and $N$ be the lattice of one-parameter subgroups. The natural pairing $\langle \,\cdot\,,\,\cdot\,\rangle\colon N \times M \to \Z$ extends to the pairing $\langle \,\cdot\,,\,\cdot\,\rangle\colon N_\QQ \times M_\QQ \to \QQ$ between the associated vector spaces $N_\QQ=N\otimes\QQ$ and $M_\QQ=M\otimes\QQ$.
Fix a basis of $N$ and its dual basis of $M$. 
Let $\sigma \sse N_\QQ$ and $\sigma^\vee \sse M_\QQ$ be the positive quadrants. 
Notice that $\sigma$ and $\sigma^\vee$ are dual with respect to natural pairing, i.e. 
$$
 \sigma^\vee=\{u\in M_{\QQ} \mid \langle v,u\rangle\ge  0\,\,\, \forall v\in \sigma\}.
$$
For $m=(m_1,m_2)\in M$ by $\chi^m$ we denote a Laurent monomial $x_1^{m_1} x_2^{m_2}$. 
One can easily see that the standard torus action corresponds to the standard grading of the coordinate ring of $\A^2$, which coincides with the natural $\Z^2$-grading of the polynomial algebra:
$$
\K [x_1, x_2]=\bigoplus_{m\in M\cap\, \sigma^\vee}\K\chi^m.
$$

For the combinatorial description of a general toric affine variety see \cite{Co, Fu}.

\subsection{Homogeneous derivations of the coordinate ring of $\A^2$}\label{subsec:hom_der}

Let us recall that a derivation $\partial \in \Der(\K[x_1, x_2])$ of the polynomial algebra $\K[x_1,x_2]$ is a linear map $\partial\colon \K[x_1,x_2] \to \K[x_1,x_2]$ satisfying the Leibniz rule $\partial(fg) = \partial(f)g + f\partial(g)$ for any $f, g \in \K[x_1,x_2]$. 
\begin{definition}
The derivation is called \emph{homogeneous} with respect to the standard grading discussed in the previous section if the image of any homogeneous polynomial is homogeneous as well. In this case, there is an element $\deg \partial \in \Z^2$ called the \emph{degree} of the derivation~$\partial$ such that $\deg \partial + \deg f = \deg \partial(f)$ for any homogeneous $f \in \K[x_1, x_2]$. 
\end{definition}
\begin{definition}The derivation $\partial$ is called \emph{locally nilpotent} if for any $f \in \K[x_1, x_2]$ there exists a positive integer~$n$ such that $\partial^n(f) = 0$. The set of locally nilpotent derivations is denoted by $\LND(\K[x_1, x_2])$. 
\end{definition}
Let $\partial\in\Der(\K[x_1, x_2])$ be a locally nilpotent derivation. There exists a group $\exp(\K\partial)$ which is {\it a $\Ga$-subgroup} (one-parameter group) of~$\Aut(\A^2).$  Elements of $\exp(\K\partial)$ act on functions from $\K([x_1, x_2])$ via the Baker--Campbell--Hausdorff formula:
\begin{equation}\label{eq:BCH_formula}
    \exp(t\partial)(f)=f+\sum_{n=1}^\infty t^n\frac{\partial^n(f)}{n!},\quad t\in \K, \; f\in \K[x_1, x_2].
\end{equation}

Since $\partial$ is locally nilpotent, for every $f$ this sum has finite number of non-zero summands and the definition is correct. One can see that $\exp$ is one-to-one correspondence between $\Ga$-subgroups of $\Aut(\A^2)$, normalized by standard torus action, and homogeneous locally nilpotent derivations of $\K[x_1, x_2]$. 

\subsection{Demazure roots}\label{subsec:Dem_roots}
Let us denote by $\Xi = \{\rho_1, \rho_2\}$ the set of ray generators of a cone~$\sigma$.
\begin{definition}
A {\it Demazure root} which corresponds to a primitive ray generator $\rho_i \in \Xi$ is a vector $e \in M$ such that the following conditions hold:
\begin{enumerate}
    \item $\langle \rho_i, e \rangle =-1 $;
    \item $\langle \rho_j, e \rangle \ge 0 $ for all $j \neq i$. 
    \end{enumerate}
For more detail, see \cite{De}, \cite{Lie} and \cite{Li}.
\end{definition}

A Demazure root $e$ which corresponds to a primitive ray generator $\rho \in \Xi$ gives a homogeneous locally nilpotent derivation $\partial_{\rho, e}$ of degree $e$ via the formula
$$
\partial_{\rho, e} (\chi^m) = \langle \rho, m \rangle \chi^{m+e}.
$$
All homogeneous $\LND$s arise this way (see \cite[Theorem 2.7]{LiC1}). Thus we have one-to-one correspondence between Demazure roots and $\Ga$-subgroups of $\Aut(\A^2)$ normalized by the torus \(\T.\)

\subsection{Two important properties of LND's in \(\K[x,y]\)}\label{subsec:two_res}
Below we formulate \cite[Lemma 1.10]{Li}, \cite[Lemma 4.7]{AKZ} for the case of the polynomial algebra in two variables.
\begin{lemma} \label{non-homog-deriv}
\begin{itemize}
\item[\rm (a)] Any derivation $\partial\in\Der(\K[x_1, x_2])$ admits a decomposition \begin{equation}\label{hom_dec}
\partial= \sum_{e \,\in\, \sigma^\vee\cap\, M} \partial_e\end{equation} 
where $\partial_e$ is a homogeneous derivation of degree $e$. 
\item[\rm (b)] The set $S(\partial) =\{e \in \sigma^\vee\cap M | \partial_e\neq 0\}$ is finite. Its convex hull $N(\partial)$ 
is a polygon called the {\rm Newton polygon of} $\partial$. 
\item[\rm (c)] If $\partial\in {\LND}\,(\K[x_1, x_2])$  then for any face $\tau$ of $N(\partial)$ one has
$$\partial_\tau:= \sum_{e\in \tau\cap M} \partial_e\in {\LND}\,(\K[x_1, x_2])\,.$$ In particular, for any vertex $e$ of $N(\partial)$ one has $\partial_e\in {\LND}\,(\K[x_1, x_2])$. 
\end{itemize}
\end{lemma}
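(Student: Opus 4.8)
The plan is to dispatch (a) and (b) formally and to derive (c) from a ``leading-term principle'' together with convex geometry. For (a): a derivation of $\K[x_1,x_2]$ is determined by the polynomials $f_i:=\partial(x_i)$, so that $\partial = f_1\,\tfrac{\partial}{\partial x_1}+f_2\,\tfrac{\partial}{\partial x_2}$; expanding each $f_i$ into monomials $\chi^m$ and using that $\chi^m\,\tfrac{\partial}{\partial x_i}$ shifts the degree of every homogeneous polynomial by the fixed vector $m-\deg x_i$, one collects the summands with a common shift $e$ to obtain a homogeneous derivation $\partial_e$ of degree $e$, hence the finite decomposition \eqref{hom_dec}. (More invariantly, $\partial_e(f)$ is the degree-$(d+e)$ homogeneous component of $\partial(f)$ when $\deg f=d$, and the Leibniz rule for $\partial_e$ is obtained by extracting one graded component of the Leibniz rule for $\partial$.) For (b): $S(\partial)$ is finite because $f_1,f_2$ have finitely many monomials, and the convex hull $N(\partial)$ of a finite planar set is a polygon (possibly degenerate).

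For (c) I would first reduce to the following statement: for every $u\in N$, if $\partial\in\LND(\K[x_1,x_2])$ then $\partial^{(u)}:=\sum\{\partial_e\mid e\in S(\partial),\ \la u,e\ra = M_u\}$ is locally nilpotent, where $M_u:=\max_{e\in S(\partial)}\la u,e\ra$. Granting this, (c) becomes pure convex geometry: every face $\tau$ of the polygon $N(\partial)$ is cut out by a supporting functional, i.e.\ $\tau=\{e\in N(\partial)\mid \la u,e\ra = M_u\}$ for a suitable $u\in N$, with $u=0$ recovering the improper face $N(\partial)$ and $\partial_\tau=\partial$; and since $N(\partial)$ is the convex hull of $S(\partial)$, for such $u$ one has $\partial_\tau=\partial^{(u)}$. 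Specializing $\tau$ to a vertex then gives $\partial_e\in\LND(\K[x_1,x_2])$.

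To prove the reduced statement I would, after replacing $u$ by a primitive multiple, grade $\K[x_1,x_2]=\bigoplus_{j\in\Z}R_j$ by the value of $\la u,\cdot\ra$ on monomials. With respect to this $\Z$-grading $\partial$ raises the grading by at most $M_u$, with $\partial^{(u)}$ the part that raises it by exactly $M_u$; thus $\partial=\partial^{(u)}+(\text{terms of strictly lower }u\text{-degree})$, and $\partial^{(u)}$ is $u$-homogeneous. If $\partial^{(u)}$ were not locally nilpotent, a standard stabilization argument on the $u$-homogeneous components of a witness yields a single $u$-homogeneous $f$ with $(\partial^{(u)})^n(f)\neq 0$ for all $n$; but then $(\partial^{(u)})^n(f)$ is exactly the top $u$-homogeneous component of $\partial^n(f)$, so no cancellation occurs there and $\partial^n(f)\neq 0$ for all $n$, contradicting the local nilpotency of $\partial$. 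Hence $\partial^{(u)}\in\LND(\K[x_1,x_2])$.

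I expect the only delicate point to be this last step: to reach an \emph{arbitrary} face one must allow cocharacters $u$ lying outside the cone $\sigma$, for which the $u$-grading has infinite-dimensional homogeneous pieces --- but this is harmless, since the argument only ever follows one homogeneous component of $\partial^n(f)$ for a fixed $f$ and finitely many $n$. (Geometrically $\partial^{(u)}$ generates the degeneration of the $\Ga$-action $\exp(t\partial)$ obtained by conjugating with the one-parameter subgroup $u$ and rescaling $t$; the argument above makes that limit rigorous without invoking the Ind-group structure of $\Aut(\A^2)$.)
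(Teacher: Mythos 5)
Your proof is correct. Note that the paper does not actually prove this lemma --- it is quoted from Liendo and from Arzhantsev--Kuyumzhiyan--Zaidenberg --- so there is no in-paper argument to compare against; your write-up is essentially the standard argument behind those citations. Parts (a) and (b) via $\partial=\partial(x_1)\tfrac{\partial}{\partial x_1}+\partial(x_2)\tfrac{\partial}{\partial x_2}$ are fine, and your part (c) is the usual ``principal part of an LND is an LND'' argument: choose a cocharacter $u$ supporting the face $\tau$, grade by $\langle u,\cdot\rangle$, reduce by pigeonhole to a $u$-homogeneous witness $f$ (using that $\partial_\tau$ is $u$-homogeneous, so non-vanishing of $(\partial_\tau)^n(f)$ forces non-vanishing on some fixed component $f_j$ for all $n$), and observe that $(\partial_\tau)^n(f)$ is exactly the top $u$-graded component of $\partial^n(f)$, so it cannot be killed by cancellation with the lower-order parts of $\partial$. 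Your closing remark is also right: the $u$-graded pieces may be infinite-dimensional when $u\notin\sigma$, but the argument only ever extracts one graded component of a single polynomial at a time, so this causes no harm. The only caveat worth flagging is with the statement rather than your proof: the degrees $e=m-\deg x_i$ produced by your decomposition need not lie in $\sigma^\vee$ (for instance $\deg\bigl(y^a\tfrac{\partial}{\partial x}\bigr)=(-1,a)$), so the index set in part (a) should really be all of $M$ (equivalently, the set of $e\in M$ whose coordinates are $\ge -1$ with at most one equal to $-1$); and $N(\partial)$ may degenerate to a segment or a point, which does not affect the face argument.
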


Every one-parameter subgroup $T$ of the torus $\T$ corresponds to the vector $\rho\in \sigma.$ Moreover, $\rho$ determines a linear form $\L_T$ on the lattice $M$ via the formula 
$$
\L_T(e)=\la e,\rho\ra, \quad e\in M.
$$
\par 
Let us recall that the adjoint action of the Lie algebra $\Der(\K[x_1, x_2])$ on itself is given by the formula
$$\ad_\partial (U)=[\partial,U]=\partial\circ U-U \circ \partial. $$
The following lemma describes the adjoint action of $\Ga$-subgroups, which arise from locally nilpotent derivations $\partial.$

\begin{lemma}\cite{Ma}\label{ad-LND}
If $\pd\in\LND(\K[x_1, x_2])$ then the corresponding adjoint action operator ${\ad}_\pd\in {\rm End}\,(\Der(\K[x_1, x_2]))$ is locally nilpotent.
Moreover, adjoint action of $\exp(\K\partial)$ on $U$ is described by the Baker-Campbell-Hausdorff formula: 
    \begin{gather}\label{BCH_form}
        \Ad_{\exp(\K\partial)}(U)=U+\sum_{n=1}\frac{\ad^n_\partial(U)}{n!}.
    \end{gather}
\end{lemma}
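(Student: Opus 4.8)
The plan is to compute the conjugation $\Ad_{\exp(t\partial)}(U)=\exp(t\partial)\circ U\circ\exp(-t\partial)$ inside the associative algebra $\End_\K(\K[x_1,x_2])$ of $\K$-linear operators, to observe that as $t$ varies it traces out a polynomial curve in $\Der(\K[x_1,x_2])$, and to read off both assertions of the lemma from the differential equation that this curve satisfies.

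I would start from two standard facts: $\Der(\K[x_1,x_2])$ is a free $\K[x_1,x_2]$-module with basis $\partial_{x_1},\partial_{x_2}$, so a derivation is determined by its values on $x_1$ and $x_2$; and for any $\K$-algebra automorphism $\varphi$ of $\K[x_1,x_2]$ and any $D\in\Der(\K[x_1,x_2])$ the conjugate $\varphi\circ D\circ\varphi^{-1}$ is again a derivation. Applying the second fact to $\varphi=\exp(t\partial)$, which is a genuine automorphism of $\A^2$ for every $t\in\K$, the operator $U(t):=\exp(t\partial)\circ U\circ\exp(-t\partial)$ lies in $\Der(\K[x_1,x_2])$ for each $t$, with $U(0)=U$ and $U(1)=\Ad_{\exp(\K\partial)}(U)$.

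The key technical step is to show that $U(t)$ is \emph{polynomial} in $t$ of bounded degree, that is, $U(t)=\sum_{n=0}^{N}t^{n}V_{n}$ with $V_{n}\in\Der(\K[x_1,x_2])$ and $N$ not depending on $t$. This is precisely where local nilpotence of $\partial$ enters. Put $d_i=\max\{k:\partial^{k}(x_i)\neq 0\}$, which is finite by local nilpotence; then by \eqref{eq:BCH_formula} the polynomial $\exp(-t\partial)(x_i)$ has degree $d_i$ in $t$, applying $U$ does not change the $t$-degree, and applying $\exp(t\partial)$ raises it by at most $\max_{0\le k\le d_i}\max\{\ell:\partial^{\ell}(U(\partial^{k}x_i))\neq 0\}$, which is again finite. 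Hence each coefficient $U(t)(x_i)$ is a polynomial in $t$ of degree bounded uniformly in $t$, and since $\K$ is infinite the $V_n$ are well defined elements of $\Der(\K[x_1,x_2])$. Verifying this uniform degree bound is, I expect, the main (though not conceptually deep) obstacle; alternatively one could first invoke Rentschler's theorem to conjugate $\partial$ into the form $p(x_1)\partial_{x_2}$ and establish finiteness by a direct computation keeping track of the degree in $x_2$.

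Finally, differentiating in $t$ the operator identities $\frac{d}{dt}\exp(t\partial)=\partial\circ\exp(t\partial)=\exp(t\partial)\circ\partial$ (legitimate termwise, as all the series involved are finite) gives $\frac{d}{dt}U(t)=\partial\circ U(t)-U(t)\circ\partial=\ad_\partial(U(t))$, an identity of polynomial-in-$t$ families of operators. Thus $U(t)$ solves the linear equation $U'(t)=\ad_\partial(U(t))$ with $U(0)=U$; iterating it yields $U^{(n)}(t)=\ad_\partial^{\,n}(U(t))$, and since $\deg_t U(t)\le N$ the case $n=N+1$ evaluated at $t=0$ gives $\ad_\partial^{\,N+1}(U)=0$, proving that $\ad_\partial$ is locally nilpotent on $\Der(\K[x_1,x_2])$. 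Taylor expanding the polynomial $U(t)$ around $t=0$ then gives $U(t)=\sum_{n\ge 0}\frac{t^{n}}{n!}\ad_\partial^{\,n}(U)$, a finite sum, which at $t=1$ is exactly the Baker--Campbell--Hausdorff formula in the statement.
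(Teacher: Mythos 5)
Your argument is correct. Note that the paper does not prove this lemma at all --- it is quoted from the reference [Ma] --- so what you have produced is a self-contained proof of a fact the authors take from the literature. Your route (set $U(t)=\exp(t\pd)\circ U\circ\exp(-t\pd)$, check it is a polynomial family of derivations, derive the ODE $U'(t)=\ad_\pd(U(t))$, and read off both nilpotence of $\ad_\pd$ and the exponential formula from the Taylor expansion) is sound: the degree bound you flag as the ``main obstacle'' is in fact immediate from exactly the computation you sketch, since $\exp(-t\pd)(x_i)$ is a polynomial in $t$, $U$ is $\K$-linear and hence $\K[t]$-linear on coefficients, and $\exp(t\pd)$ applied to each of the finitely many coefficients again terminates; a derivation is determined by its values on $x_1,x_2$, so this bounds $\deg_t U(t)$. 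A marginally shorter variant, closer to what [Ma] does, avoids the differential equation entirely: for fixed $f$ expand
\[
\exp(t\pd)\bigl(U(\exp(-t\pd)(f))\bigr)=\sum_{i,j}\frac{t^{i}(-t)^{j}}{i!\,j!}\,\pd^{i}U\pd^{j}(f)
\]
(a finite double sum), regroup by $n=i+j$ using the identity $\ad_\pd^{n}(U)=\sum_{i+j=n}\binom{n}{i}(-1)^{j}\pd^{i}U\pd^{j}$ (valid because left and right composition with $\pd$ commute), and observe that all terms with $n$ large vanish on $f$; taking $f=x_1,x_2$ gives a uniform bound and hence $\ad_\pd^{N+1}(U)=0$. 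Both approaches use characteristic zero in the same place (division by $n!$), and yours has the small extra merit of making the standard Lie-theoretic mechanism $\frac{d}{dt}\Ad_{\exp(t\pd)}=\ad_\pd$ explicit.
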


\begin{definition} Let \(\L_T=\alpha\) be a supporting affine line of the Newton polytope \(N(\pd)\). Then we define the {\it T-principal part} $\partial_T$ of the derivation $\partial$ corresponding to this line by
$$\partial_T:= \sum_{e\,\in\,\{\L_T=\alpha\}\cap S(\partial)} \partial_e.$$
\end{definition}

According to Lemma~\ref{non-homog-deriv}, if $\partial$ is locally nilpotent then also $\partial_T$ is.
\subsection{Ind-group structure on $\Aut(\A^2)$}\label{subsec:ind_aut}
The group of regular automorphisms of~$\A^2$ has the structure of an ind-group \cite{Kr, Sasha}. 

\noindent It means that $$G = \indlim \Sigma_s.$$ Here $\Sigma_1 \hookrightarrow \Sigma_2 \hookrightarrow \Sigma_3 \hookrightarrow \ldots$ is a sequence of closed embeddings of algebraic varieties $\Sigma_s$, and for each pair $(i, j)$ there exists a number $n(i, j)$ such that $\Sigma_i \times \Sigma_j \to \Sigma_{n(i, j)}$, $(g, h) \mapsto gh^{-1}$ is a morphism of algebraic varieties. 

The closure of a subgroup $G\sse\Aut(\A^2)$ is defined as
$$
\overline{G} = \indlim \overline{G\cap\Sigma_s}.
$$
\begin{definition}
A group $G$ acts on a set $X$ {\it infinitely transitively}, if it acts $m$-transitively for any positive integer $m$, which means that for any two sets of pairwise distinct points $(x_1,\ldots,x_m)$ and $(y_1,\ldots,y_m)$ there exists $g \in G$ such that $g.x_i=y_i$ for $i=1,\ldots,m$.
\end{definition}

\medskip

The next propositions are crucial for our understanding of subgroups closure of~$\Aut(\A^2)$.
\begin{proposition}\label{principal-part}\cite[Proposition 4.16]{AKZ} Consider a subgroup $G\sse \Aut(X)$ 
normalized by a one-parameter subgroup $T$ of the torus $\T$. Let $H=\exp(\K\partial)$ be a $\Ga$-subgroup 
of $G$ where $\partial\in {\LND}\,(\K[x_1, x_2])$, and let $\partial_T\in {\LND}\,(\K[x_1, x_2])$ be the $T$-principal part of $\partial$. 
Then $H_T=\exp(\K\partial_T)$ is a $\Ga$-subgroup of $\overline{G}$. \end{proposition}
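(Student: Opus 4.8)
The plan is to realize the $T$-principal part $\partial_T$ as a limit, inside the ind-group topology, of a family of automorphisms that lie in $G$ itself, obtained by conjugating $H=\exp(\K\partial)$ by one-parameter subgroups of the torus and rescaling the parameter. Concretely, I would fix a one-parameter subgroup $\lambda_n \colon \Gm \to \T$ moving "in the direction of $T$" (e.g. $\lambda_n(t) = \lambda_T(t) \cdot \mu(t^n)$ for a fixed $\mu$ pushing the non-principal degrees of $\partial$ to higher torus-weight, or simply $\lambda_n$ adapted so that $\mathrm{Ad}_{\lambda_n(t)}$ contracts all monomials $\chi^e$ with $e$ not on the supporting line $\{\L_T=\alpha\}$ faster than those on it). Since $G$ is normalized by $T$, each conjugate $\lambda_n(t)\, H\, \lambda_n(t)^{-1}$ is again a $\Ga$-subgroup of $G$; by the formula $\partial_{\rho,e}(\chi^m)=\la\rho,m\ra\chi^{m+e}$ together with the homogeneous decomposition \eqref{hom_dec}, conjugating by a torus element multiplies the degree-$e$ component $\partial_e$ by a character value $t^{\la e,\,\cdot\,\ra}$, so after an appropriate rescaling $t \mapsto c\, t$ of the additive parameter the whole family $\exp(\K \cdot \mathrm{Ad}_{\lambda_n(t)}\partial)$ stays inside $G$ and its "leading term" as $t\to 0$ (or $\infty$) is exactly $\exp(\K\partial_T)$.

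The key steps, in order, are: (1) write $\partial = \sum_{e} \partial_e$ and identify the supporting line $\{\L_T = \alpha\}$; split $S(\partial) = S_0 \sqcup S_1$ where $S_0 = S(\partial)\cap\{\L_T=\alpha\}$ carries $\partial_T$ and every $e\in S_1$ has $\L_T(e) > \alpha$ (strict inequality since $\{\L_T=\alpha\}$ is a \emph{supporting} line of the Newton polytope). (2) Choose the torus one-parameter subgroup $\rho\in\sigma$ defining $T$, so that $\mathrm{Ad}_{\rho(t)}$ scales $\partial_e$ by $t^{-\la e,\rho\ra} = t^{-\L_T(e)}$; then $\mathrm{Ad}_{\rho(t)}(t^{\alpha}\partial) = \partial_T + \sum_{e\in S_1} t^{\alpha - \L_T(e)}\partial_e$, and every exponent $\alpha - \L_T(e)$ is strictly negative — so replace $t$ by $t^{-1}$ to make these exponents positive and the sum tends to $\partial_T$ as $t\to 0$. (Care is needed because $\rho$ may not be in $N$; one clears denominators by passing to a sublattice multiple, or argues with a genuine cocharacter of $\T$ that restricts to the right linear form on $S(\partial)$, which exists because $S(\partial)$ is finite.) (3) Note each $\exp(\K\,\mathrm{Ad}_{\rho(t)}(t^\alpha\partial))$ is literally $\rho(t)^{-1}\exp(\K\partial)\rho(t)$ reparametrized, hence a subgroup of $G$, so it is contained in $\overline{G}\cap\Sigma_s$ for a fixed $s$ once we bound degrees uniformly in $t$. (4) Use the description of $\overline{G} = \indlim\,\overline{G\cap\Sigma_s}$ to conclude: the curve $t\mapsto \exp(\K\,\mathrm{Ad}_{\rho(t)}(t^\alpha\partial))$ (more precisely, its image in a fixed $\Sigma_s$) is a morphism from $\A^1$ or $\Gm$ landing in the closed subvariety $\overline{G\cap\Sigma_s}$, so its value at $t=0$, namely $\exp(\K\partial_T)$, also lies there; thus $H_T \subseteq \overline{G}$.

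The main obstacle is step (3)–(4): controlling \emph{degrees uniformly} so that the whole family, including its limit, sits in a single $\Sigma_s$, and then knowing that "taking the limit $t\to 0$" is a legitimate operation in the ind-topology. One has to check that $\exp(t'\,\mathrm{Ad}_{\rho(t)}(t^\alpha\partial))$ has bounded degree as a polynomial automorphism independently of $t$ (this follows because conjugation by a torus element does not change the degree of a polynomial automorphism, only rescales coefficients, and the rescaling factors $t^{\alpha-\L_T(e)}$ are bounded as $t\to 0$ after the sign fix), so the family factors through one $\Sigma_s$; then the map $\A^1 \to \Sigma_s$, $t\mapsto \exp(\K\cdots)(\text{fixed generator})$ is a morphism of varieties, its restriction to $\Gm$ lands in the closed set $\overline{G\cap\Sigma_s}$, hence so does the value at $0$. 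Everything else — the BCH formula \eqref{BCH_form} describing $\mathrm{Ad}$, the fact that $\partial_T$ is an LND by Lemma~\ref{non-homog-deriv}(c), and that torus-conjugates of $\Ga$-subgroups of $G$ remain in $G$ because $G$ is normalized by $\T$ (or at least by $T$; one may need the stronger normalization by the full torus or work with $T$ generating enough cocharacters) — is routine. I would single out as the genuinely delicate point the verification that a single cocharacter $\rho(t)$ of $\T$ (an honest element of $N$, not merely $N_\QQ$) can be chosen so that $\la e,\rho\ra$ separates the supporting line from the rest of $S(\partial)$ with the correct signs; since $S(\partial)$ is finite and $\{\L_T=\alpha\}$ is a supporting line, such an integral $\rho$ exists, but this is exactly where the "supporting line" hypothesis is used and must be invoked carefully.
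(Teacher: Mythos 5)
This proposition is not proved in the paper at all: it is quoted verbatim from \cite[Proposition~4.16]{AKZ}, so there is no internal argument to compare against. Your degeneration proof --- conjugate $\exp(\K\partial)$ by $\lambda(t)\in T$, rescale by $t^{\pm\alpha}$ so that the homogeneous components on the supporting line $\{\L_T=\alpha\}$ survive while all others acquire strictly positive powers of $t$, note that the resulting family of automorphisms has uniformly bounded degree and hence lies in a single $\Sigma_s$, and pass to the limit $t\to 0$ inside the closed subvariety $\overline{G\cap\Sigma_s}$ --- is correct and is essentially the standard argument used in the cited source. The only superfluous worry is the integrality of $\rho$: since $T$ is an honest one-parameter subgroup of $\T$, its cocharacter already lies in $N$, so $\alpha=\langle e,\rho\rangle\in\Z$ and no clearing of denominators is needed.
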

\par Note that any algebraic group is an ind-group. Indeed, let \(H\) be an algebraic group, then sequence of \(\Sigma_i=H\) for all \(i\ge 1\) defines a structure of an ind-group on \(H.\)
\begin{definition}
	A subgroup \(H\) of an ind-group \(\G\) is called {\it algebraic} if there exists a structure of an algebraic group on \(H\) such that \(H\hookrightarrow\G\) is a morphism of ind-groups.
\end{definition}
\begin{definition}
    A subgroup $G\sse\Aut(X)$ generated by a family of connected algebraic subgroups of $\Aut(X)$ 
is called \emph{algebraically generated}.
\end{definition} Orbits of an algebraically generated automorphism group $G$ are locally closed subsets of $X$ 
in the Zariski topology \cite[Propsition 1.3]{AFKKZ}.  

\begin{proposition}\label{lem:many-points}\cite[Proposition 3.4]{AKZ}
Let $G\sse \Aut(X)$ be an algebraically generated subgroup. Then the following holds.
\begin{itemize}
\item[{\rm (a)}] The orbits of $G$ and of $\overline{G}$ in $X$ are the same. In particular, if \(\ol{G}\) acts on \(X\) with an open orbit \(\mc{O}_{\ol{G}}\) then also \(G\) does and \(\mc{O}_G=\mc{O}_{\ol{G}}.\)
\item[{\rm (b)}]  If  $\overline{G}$ acts $m$-transitively on ${{\mathcal{O}}_{\overline{G}}}$ then $G$ does also.
\item[{\rm (c)}]  If  $\overline{G}$ acts  infinitely transitively on ${{\mathcal{O}}_{\overline{G}}}$ then also $G$ does.
\end{itemize}
\end{proposition}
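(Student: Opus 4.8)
The plan is to prove (a) directly, by a dimension argument for orbit closures, and then to deduce (b)~--- and hence (c)~--- by transporting the problem to the diagonal action of $G$ on a Cartesian power of $X$. For (a), since $G\sse\overline G$ every $G$-orbit lies in an $\overline G$-orbit, so it suffices to show that $g\cdot x\in G\cdot x$ for all $g\in\overline G$ and $x\in X$. Here I would first record two facts. \emph{Fact 1:} as $G\cdot x$ is locally closed (quoted above), it is a dense constructible subset of its Zariski closure $\overline{G\cdot x}$, so it meets every irreducible component of $\overline{G\cdot x}$ in a dense open subset; hence $\dim\overline{G\cdot x}=\dim G\cdot x$, and the boundary $\overline{G\cdot x}\setminus G\cdot x$ is a $G$-invariant closed set of strictly smaller dimension, thus a union of $G$-orbits each of dimension $<\dim G\cdot x$. \emph{Fact 2:} $\overline G$ is again a subgroup of $\Aut(X)$ (by the ind-group axiom that $(g,h)\mapsto gh^{-1}$ restricts to a morphism on the pieces $\Sigma_s$), and each $g\in\overline G$ lies in the Zariski closure of $G\cap\Sigma_s$ inside some $\Sigma_s$; since the orbit map $\Sigma_s\to X$, $h\mapsto h\cdot x$, is a morphism of varieties carrying $G\cap\Sigma_s$ into $G\cdot x$, it carries $\overline{G\cap\Sigma_s}$ into $\overline{G\cdot x}$, and therefore $g\cdot x\in\overline{G\cdot x}$ for every $g\in\overline G$.

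Now suppose $g\cdot x\notin G\cdot x$ for some $g\in\overline G$. By Fact 2 the point $g\cdot x$ lies in the boundary $\overline{G\cdot x}\setminus G\cdot x$, so Fact 1 gives $\dim G\cdot(g\cdot x)<\dim G\cdot x$. Applying Fact 2 to $g^{-1}\in\overline G$ and to the point $g\cdot x$ yields $x=g^{-1}\cdot(g\cdot x)\in\overline{G\cdot(g\cdot x)}$; if $x\notin G\cdot(g\cdot x)$ then Fact 1 forces $\dim G\cdot x<\dim G\cdot(g\cdot x)$, while if $x\in G\cdot(g\cdot x)$ then $G\cdot x=G\cdot(g\cdot x)$ and the two dimensions are equal~--- in either case this contradicts $\dim G\cdot(g\cdot x)<\dim G\cdot x$. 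Hence $\overline G\cdot x=G\cdot x$ for every $x\in X$, which is the first assertion of (a); the statement about an open orbit is then immediate.

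For (b), fix $m\ge 1$ and pass to the diagonal embedding $\Delta\colon\Aut(X)\to\Aut(X^m)$, $\phi\mapsto\phi\times\cdots\times\phi$. Then $\Delta(G)$ is algebraically generated in $\Aut(X^m)$ (the generating connected algebraic subgroups act diagonally), and because $\Delta$ maps each filtration piece of $\Aut(X)$ morphically into a piece of $\Aut(X^m)$, one gets $\Delta(\overline G)\sse\overline{\Delta(G)}$. Write $\mc O=\mc O_G=\mc O_{\overline G}$ and let $\mc O^{(m)}\sse X^m$ be the $\Delta(G)$-invariant set of $m$-tuples of pairwise distinct points of $\mc O$; by definition $\mc O^{(m)}$ is exactly the configuration space on which $m$-transitivity is tested. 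The hypothesis says $\Delta(\overline G)$ acts transitively on $\mc O^{(m)}$, so $\mc O^{(m)}$ lies in a single $\overline{\Delta(G)}$-orbit; and since $\mc O^{(m)}$ is also a union of $\Delta(G)$-orbits, hence $\overline{\Delta(G)}$-invariant, it must coincide with that single $\overline{\Delta(G)}$-orbit. Applying part (a) to $\Delta(G)\sse\Aut(X^m)$ shows $\mc O^{(m)}$ is a single $\Delta(G)$-orbit, i.e.\ $G$ acts $m$-transitively on $\mc O$. Running this for every $m$ gives (c).

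The step I expect to require the most care~--- and that I would isolate as a separate lemma~--- is the compatibility used in (b): that $\Delta$ is a morphism of ind-groups, so that $\Delta(\overline G)\sse\overline{\Delta(G)}$. For affine $X$ this should be routine, since $\phi\times\cdots\times\phi$ obeys the same degree bounds as $\phi$ and so respects the filtrations, but it deserves an explicit statement because the whole reduction in (b) and (c) rests on it. Everything else in the argument comes down to Fact 1 and the dimension bookkeeping for $G$-orbits carried out in the proof of (a).
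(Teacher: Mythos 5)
The paper does not prove this proposition at all: it is quoted verbatim from \cite[Proposition 3.4]{AKZ}, so there is no in-text argument to compare against. Your reconstruction is correct and is essentially the argument of the cited source (and of \cite{AFKKZ}): local closedness of orbits plus the dimension comparison between an orbit and its boundary gives (a), and the diagonal embedding into $\Aut(X^m)$ reduces (b), hence (c), to (a) applied to $\Delta(G)$. The one step you flag, $\Delta(\ol{G})\subseteq\overline{\Delta(G)}$, is indeed the only place where the ind-group filtrations must be checked to be compatible, and it holds for the degree filtration on $\Aut(\A^2)$ relevant to this paper.
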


\section{Main Results}\label{sec:main}
Let us introduce two special types of $\Ga$-subgroups of automorphism group $\Aut(\A^2).$ 

\noindent For any $a,b\in\Z_{\ge 0}$
$$H_a\colon(x,y)\mapsto (x+\alpha y^a,y)\;\;\mbox{and}\;\; K_b\colon(x,y)\mapsto (x, y+\beta x^b),\;\; \alpha , \beta \in\K\,.$$
    
In fact, these groups and groups which arise from homogeneous locally nilpotent derivations are the same: one can see that $H_a = \exp(\K y^{a}\frac{\partial}{\partial x})$ and $K_b = \exp(\K x^{b}\frac{\partial}{\partial y})$.
Let us also denote by $K_s^\infty$ the group generated by $K_b$ for $b \geqslant s\,$: $K_s^\infty = \la K_{s}, K_{s+1}, \ldots,K_{s+n},\ldots\ra$. Notice that $K_s^\infty$ consists of all automorphisms mapping $(x,y)$ to $(x, y+S(x)),$ where $S(x)$ is divisible by $x^s.$ 

\begin{lemma}\label{secondPart}
Let $G = \la H_t, K_s^\infty \ra$ for some $s,t \in \Z_{>0}$. Then $G$ acts on $\A^2\setminus\{0\}$ infinitely transitively.
\end{lemma}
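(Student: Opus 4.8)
The plan is to fix a positive integer $m$ and show that $G = \la H_t, K_s^\infty\ra$ acts $m$-transitively on $\A^2\setminus\{0\}$. The natural strategy is to reduce arbitrary configurations of $m$ distinct points to a standard one. I would split the argument into two phases. First I would argue that using elements of $K_s^\infty$ together with $H_t$ one can move any single point of $\A^2\setminus\{0\}$ to a point with prescribed nonzero first coordinate; more importantly, since $K_s^\infty$ consists of all maps $(x,y)\mapsto(x, y+S(x))$ with $x^s \mid S(x)$, once the $m$ points $(x_1,y_1),\dots,(x_m,y_m)$ have pairwise distinct nonzero first coordinates $x_i$, one can prescribe the second coordinates completely independently: given target values $y_i'$, the interpolation problem $S(x_i) = y_i' - y_i$ with $x^s \mid S(x)$ is solvable because $x_i \neq 0$ (divide the usual Lagrange interpolation polynomial adjustments by units; concretely one can use $\prod_{j}(x - x_j)$ times suitable polynomials, and since $s$ is fixed while $m$ is arbitrary this is just a finite-dimensional linear system that is nonsingular once the $x_i$ are distinct and nonzero). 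Symmetrically, $H_t$ lets us adjust first coordinates by adding $\alpha y^t$, but $H_t$ alone is only a single one-parameter group, so the role of $H_t$ is mainly to break degeneracies, not to do interpolation.

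The key technical step, then, is the first phase: from an arbitrary $m$-tuple of distinct points in $\A^2\setminus\{0\}$, reach an $m$-tuple whose first coordinates are pairwise distinct and all nonzero. Here I would proceed greedily. Points with nonzero $y$-coordinate can have their $x$-coordinate changed by $H_t$ (adding $\alpha y^t \neq 0$), so by a general-position argument (the set of "bad" $\alpha$ making some two first coordinates coincide or some first coordinate vanish is finite) a single application of $H_t$ separates all points that do not lie on the line $y = 0$. Points on $y = 0$ are of the form $(x_i, 0)$ with $x_i \neq 0$ (as they lie in $\A^2\setminus\{0\}$); such a point is already fine if its $x$-coordinate is nonzero and distinct from the others, but if two points share it, I would first apply an element of $K_s^\infty$ of the form $(x,y)\mapsto(x, y + x^s)$, which fixes the $x$-coordinate but moves $(x_i,0)$ off the line $y=0$ (since $x_i^s \neq 0$) without creating new collisions for generic choice — then the earlier $H_t$ step applies. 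So the order is: (1) apply a generic $K_s^\infty$-element to push all points with $y_i = 0$ off the axis while keeping distinctness, (2) apply a generic $H_t$ to make all first coordinates distinct and nonzero, (3) apply a single $K_s^\infty$-interpolation to fix all second coordinates to target values, (4) apply one more $H_t$ and $K_s^\infty$ pair symmetrically, or more simply repeat with target configuration: by transitivity of the relation "connected by $G$" it suffices to bring every configuration to one fixed standard configuration, e.g. $(1,0),(2,0),\dots,(m,0)$ would not be reachable directly since we need distinct nonzero first coords — take instead a configuration where all coordinates are nonzero and generic.

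The cleanest bookkeeping is: show every distinct $m$-tuple is $G$-equivalent to a fixed "generic" $m$-tuple $(a_1,b_1),\dots,(a_m,b_m)$ with all $a_i, b_i$ nonzero and the $a_i$ pairwise distinct. For this: step (1) apply a generic $H_t$ to separate and un-zero all $x$-coordinates of points not on $y=0$; step (1') handle the at-most-finitely-many points on $y=0$ beforehand by a generic $K_s$-shift; step (2) now with distinct nonzero first coordinates, apply the $K_s^\infty$-interpolation of the previous paragraph to set the second coordinates to the prescribed nonzero values $b_i$; step (3) apply the $H_t$-analogue — but since $H_t$ is a single group we instead note the first coordinates can be corrected by composing with an element of $\la K_s^\infty, H_t\ra$ that interpolates in the other direction after swapping roles via a generic step. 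Actually the symmetric interpolation requires groups $H_b$ for varying $b$, which we do not have; so I would instead simply observe we never need to prescribe first coordinates exactly — it suffices that the final configuration matches the standard one, and we can choose the standard configuration's first coordinates to be whatever distinct nonzero values step (2) happens to produce is wrong too.

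The honest resolution, and the main obstacle, is that $H_t$ is a single one-parameter subgroup and cannot do interpolation on its own, so controlling first coordinates exactly needs more care: the right move is to first use $K_s^\infty$-interpolation to make the $y_i$ equal to a fixed target $b_i$, then use a generic $H_t$ to make the $x_i$ distinct and nonzero (this changes $y_i$! no — $H_t$ fixes $y$), wait: $H_t$ fixes the second coordinate, so after setting $y_i = b_i$ via $K_s^\infty$ a subsequent $H_t$ keeps $y_i = b_i$ and only moves $x_i \mapsto x_i + \alpha b_i^t$; since the $b_i$ are distinct and nonzero, choosing $\alpha$ generically makes the new $x_i = x_i + \alpha b_i^t$ pairwise distinct and nonzero and moreover we can hit \emph{any} prescribed tuple of first coordinates whose "differences along direction $b_i^t$" are consistent — which for generic target is one linear condition, not full freedom. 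So full $m$-transitivity in the first coordinate is recovered by iterating: alternately $K_s^\infty$-interpolate the $y$'s and $H_t$-shift the $x$'s; since each $H_t$-shift with distinct $b_i^t$ acts on the $x$-vector by adding a multiple of the fixed vector $(b_1^t,\dots,b_m^t)$, and we can change $(b_1^t,\dots,b_m^t)$ arbitrarily by re-running a $K_s^\infty$-interpolation, after $m$ alternations we span all of $\K^m$ in the $x$-coordinates. The main obstacle is precisely this orchestration — proving that $m$ alternating rounds suffice to reach an arbitrary target $x$-vector — and I expect that to follow from a dimension count / Vandermonde-type nondegeneracy: the vectors $(b_1^{(k)t},\dots,b_m^{(k)t})$ for $m$ sufficiently generic choices of the intermediate $b^{(k)}$ are linearly independent in $\K^m$, so their span is everything. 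With that in hand, $m$-transitivity for every $m$ — i.e. infinite transitivity — follows.
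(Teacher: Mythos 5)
You have correctly identified the two workhorses (Hermite/Lagrange interpolation with $K_s^\infty$ once the $x$-coordinates are distinct and nonzero, and generic $H_t$-shifts to break degeneracies), but your proof does not close: the central step --- that $m$ alternating rounds of $K_s^\infty$-interpolation and $H_t$-shifts let you hit an arbitrary target $x$-vector --- is left as an expectation, and there is a genuine tension in it. If the vectors $v^{(k)}=\bigl((b_1^{(k)})^t,\ldots,(b_m^{(k)})^t\bigr)$ are chosen to form a basis of $\K^m$, then the coefficients $\alpha_k$ realizing the required total shift $\sum_k\alpha_k v^{(k)}$ are uniquely determined, so you can no longer choose them generically; but genericity of each $\alpha_k$ is exactly what you were relying on to keep the intermediate $x$-coordinates pairwise distinct and nonzero, which is needed for the \emph{next} interpolation to exist. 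This can probably be repaired (extra rounds, adaptive choice of the $b^{(k)}$, a positive-dimensional solution space plus an argument that the bad loci are proper), but as written the argument is incomplete precisely at the step you flag as the main obstacle.

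The paper avoids this entirely by the standard reduction: it suffices to find, for distinct $P_1,\ldots,P_m,P,Q$, an element fixing all $P_i$ and sending $P$ to $Q$. Then one never needs to interpolate in the $x$-direction at all. After a generic $H_t$ separates the $x$-coordinates, a single $K_s^\infty$-interpolation simultaneously sends every $P_i$ to the axis $y=0$ (on which $H_t$ acts trivially) and arranges that $P$ and $Q$ acquire the \emph{same} nonzero $y$-coordinate; a single element of $H_t$ then slides $P$ horizontally onto $Q$ while fixing all the $P_i$. That is the trick your proposal is missing: use the $y$-interpolation to park the points you want to fix where $H_t$ cannot see them, so that the one-parameter group $H_t$ only ever has to move one point, a task it can do exactly.
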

\begin{proof}
For a point P, we denote by $x(P)$ the $x$-coordinate of $P$ and by $y(P)$ the $y$-coordinate of $P$. Let us show that for any distinct points $P_1,\ldots,P_m, P, Q \in \A^2\setminus\{0\}$ we can find an element $g \in G$ such that $g.P_i = P_i$ for all $i=1,\ldots,m$ and $g.P = Q$. 

\smallskip

{\bf Step $1$.} Let us construct an element $g_1 \in G$ such that 
\begin{equation}\label{step1}
x(g_1.P_i) \neq x(g_1.P) \,\text{ for all }\, i=1,\ldots,m \;\text{ and }\; x(g_1.P) \neq 0.
\end{equation}
Since the ground field $\K$ is infinite and the points $P_i$ are distinct from the point $P$, we can find an element $\alpha \in \K$ such that $x(P) + \alpha y(P)^t \neq 0$ and the inequality
$$\alpha (y(P_i)^t - y(P)^t) \neq x(P)-x(P_i) \quad \Longleftrightarrow \quad
x(P_i) + \alpha y(P_i)^t \neq x(P) + \alpha y(P)^t$$
holds for all $i=1,\ldots,m$. Then an element $g_1 \in H_t$, $g_t\colon(x,y)\mapsto (x+\alpha y^t,y)$ satisfies the condition~\eqref{step1}. 

\smallskip

{\bf Step $2$.} Let us construct an element $g_2 \in G$ such that 
\begin{equation}\label{step2}
g_2g_1.P_i=g_1.P_i \,\text{ for all }\, i=1,\ldots,m \;\text{ and }\; y(g_2g_1.P) = y(g_1.Q).
\end{equation}
By means of Hermite interpolation, we can construct a polynomial $R(t)$ with coefficients in $\K$ such that following conditions hold:
\begin{enumerate}
    \item[(a)] $t^s$ divides $R(t)$;
    \item[(b)] $R(x(g_1.P_i)) = 0$ for all $i=1,\ldots,m$;
    \item[(c)] $R(x(g_1.P)) = y(g_1.Q)-y(g_1.P)$.
\end{enumerate}
Indeed, by step 1 the numbers $x(g_1.P_i)$ differ from $x(g_1.P)$ and $x(g_1.P)$ differs from zero, so there is a polynomial $R$ which satisfies $R(0) = R'(0) = \ldots = R^{(s-1)}(0) = 0$ and the conditions $(b)$, $(c)$. 

We take an element $g_2\colon(x,y)\mapsto (x,y + R(x))$, then $g_2 \in K_s^\infty$ according to the condition~$(a)$. From the the condition~$(c)$ it follows that $y(g_2g_1.P) = y(g_1.Q)$, and because of the condition~$(b)$ the equality $g_2g_1.P_i=g_1.P_i$ holds for all $i=1,\ldots,m$. 

\smallskip

Notice that for constructed elements $g_1, g_2 \in G$ we have
\begin{equation}
    y(g_1^{-1}g_2g_1.P) = y(Q) \;\text{ and }\; g_1^{-1}g_2g_1.P_i=P_i.
\end{equation}
Indeed, $y(g_1^{-1}g_2g_1.P)= y(g_2g_1.P)=y(g_1.Q)= y(Q)$. 

\smallskip

{\bf Step $3$.} Let us denote $P_0 =g_1^{-1}g_2g_1.P$. Similarly to the step $1$ we choose $g_3 \in H_t$ such that inequalities
$x(P_i) \neq x(P_j) \neq x(Q) \neq 0$ hold for all $i, j = 0,\ldots,m$.
Let $S(t)$ be an element of the polynomial ring $\K[t]$ such that following conditions hold:
\begin{enumerate}
    \item[(a)] $t^s$ divides $S(t)$;
    \item[(b)] $y(g_3.P_i)+ S(x(g_3.P_i)) = 0$ for all $i=1,\ldots,m$;
    \item[(c)] $y(g_3.P_0)+ S(x(g_3.P_0)) = y(g_3.Q)+ S(x(g_3.Q)) \neq 0$.
\end{enumerate}
We take an element $g_4 \in K_s^\infty$, $g_4\colon(x,y)\mapsto (x,y + S(x))$. Because of the conditions above, $y(g_4g_3.P_i)=0$ for all $i=1,\ldots,m$ and $y(g_4g_3.P_0) = y(g_4g_3.Q) \neq 0$.

\smallskip

{\bf Step $4$.} Now we have $y(g_4g_3.P_0) = y(g_4g_3.Q) \neq 0$ and $y(g_4g_3.P_i)=0$ for all $i=1,\ldots,m$. There is an element $g_5 \in H_t$, that maps $g_4g_3.P_0$ to $g_4g_3.Q$. Indeed, if $$\beta =\frac{x(g_4g_3.Q)-x(g_4g_3.P_0)}{y(g_4g_3.P_0)^t},$$
then $g_5\colon(x,y)\mapsto (x + \beta y^t,y)$ maps $g_4g_3.P_0$ to $g_4g_3.Q$. And since for all $i$ holds $y(g_4g_3.P_i)=0$, we have $g_5g_4g_3.P_i = g_4g_3.P_i$ for all $i=1,\ldots,m$.

\smallskip

{\bf Step $5$.}
We take an element $g = g_3^{-1} \circ g_4^{-1} \circ g_5 \circ g_4\circ g_3 \circ g_1^{-1}\circ g_2\circ g_1$. One can see that $g.P_i = P_i$ for any $i=1,\ldots,m$ and $g.P = Q$. 
\end{proof}

\begin{lemma}\label{Grisha'sLemma}
Let $G = \langle H_1, K_d, K_a \rangle$ for some integers $a, d \in \Z_{>0}$.
Then $K_{d+a-1} \sse \overline{G}$.
\end{lemma}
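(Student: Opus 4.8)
The plan is to use Proposition~\ref{principal-part} to extract new one-parameter subgroups from the closure of $G = \langle H_1, K_d, K_a\rangle$ by computing $T$-principal parts of suitable elements of the Lie algebra. First I would record the homogeneous locally nilpotent derivations attached to the given generators: $H_1 \leftrightarrow \partial_1 = y\,\partial_x$ of degree $(-1,1)$, $K_d \leftrightarrow \delta_d = x^d\,\partial_y$ of degree $(d,-1)$, and $K_a \leftrightarrow \delta_a = x^a\,\partial_y$ of degree $(a,-1)$. Since $\overline{G}$ is a group, for any $\Ga$-subgroups $\exp(\K\partial),\exp(\K\delta)$ inside $\overline{G}$ the adjoint action $\Ad_{\exp(\K\partial)}(\delta)$ stays in $\overline{G}$ (Lemma~\ref{ad-LND}), so all iterated brackets $\ad_\partial^n(\delta)$ generate $\Ga$-subgroups of $\overline{G}$ as well, and then their $T$-principal parts do too by Proposition~\ref{principal-part}.

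The key computation is to bracket $\delta_d$ (or $\delta_a$) repeatedly against $\partial_1 = y\,\partial_x$. One checks that $\ad_{\partial_1}(x^d\,\partial_y) = [y\partial_x, x^d\partial_y] = d\,x^{d-1}y\,\partial_y - x^d\,\partial_x$, and iterating, $\ad_{\partial_1}^{k}(x^d\partial_y)$ is a sum of homogeneous pieces of degrees $(d-k,\,1-0)\ldots$ lying on a segment; more precisely each application of $\ad_{\partial_1}$ lowers the $x$-degree by $1$ and raises the $y$-degree by $1$, so after $k$ steps we get a derivation supported on a vertical-in-the-appropriate-sense face. I would then choose the one-parameter subgroup $T\subset\T$ whose linear form $\L_T$ singles out the vertex of the Newton polygon corresponding to the pure monomial term $x^{d-k}y^{k}\,\partial_y$ (up to a nonzero scalar), which is a homogeneous LND of the form $K$-type only when $k=0$; instead the useful move is: from $\langle H_1, K_d\rangle$ one produces, via principal parts of brackets, the group $H_{?}$ or rather combines $\delta_d$ and $\delta_a$. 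Concretely, I expect the right sequence is to form $[\,\delta_d,\ \text{something built from } H_1 \text{ and } \delta_a\,]$ so that the degrees $(d,-1)$ and $(a,-1)$ combine through the $(-1,1)$ direction of $H_1$ to yield a term of degree $(d+a-1,-1)$, i.e. the derivation $x^{d+a-1}\partial_y$ whose exponential is $K_{d+a-1}$. The mechanism: conjugating $\delta_a = x^a\partial_y$ by $H_1$ does nothing new (it commutes appropriately in one variable), so instead one takes a $\Ga$-subgroup like $\exp(\K\,\delta_d)$ acting adjointly on $\partial_1$, getting $\partial_1 + \text{(lower terms)}$ whose principal part for a suitable $T$ is a new derivation of $H$-type with higher exponent, say $y^{?}\,\partial_x$; then bracketing that against $\delta_a$ lands on $x^{d+a-1}\partial_y$.

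The main obstacle — and the step I would spend the most care on — is the bookkeeping of Newton polygons: I must verify that for the explicit derivation $\partial$ obtained as an iterated bracket/BCH-combination, the monomial $x^{d+a-1}\partial_y$ (or a nonzero multiple) occurs as a \emph{vertex} of $N(\partial)$, so that Lemma~\ref{non-homog-deriv}(c) and Proposition~\ref{principal-part} apply and isolate exactly $\delta_{d+a-1}$; and I must check this vertex is not accidentally cancelled by other terms of the same degree. A clean way to organize this is to fix $T$ first (choosing $\rho\in\sigma$ generic enough that $\L_T$ is injective on the relevant finite set of degrees), so that every $T$-principal part is automatically a single homogeneous term, and then just track which single monomial survives at each stage. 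Once $\exp(\K x^{d+a-1}\partial_y) = K_{d+a-1}$ is exhibited inside $\overline{G}$, the lemma is proved. The remaining routine points — that $d+a-1 \geq 1$ so $K_{d+a-1}$ makes sense, and that all intermediate derivations are genuinely locally nilpotent — follow from Lemma~\ref{non-homog-deriv} and Lemma~\ref{ad-LND} without extra work.
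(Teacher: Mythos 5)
Your framework (adjoint actions of the generators followed by $T$-principal parts, via Lemma~\ref{ad-LND} and Proposition~\ref{principal-part}) and your degree bookkeeping $(d,-1)+(a,-1)+(-1,1)=(d+a-1,-1)$ are the right starting point, and the paper's proof indeed works with the derivation of $L=\Ad_{K_a}(\Ad_{K_d}(H_1))$. But two things go wrong in your sketch. First, the ``mechanism'' paragraph is off: conjugating $x^a\partial_y$ by $H_1$ is not trivial, and no derivation of the form $y^{b}\partial_x$ with $b>1$ ever appears in this computation; the term of degree $(d+a-1,-1)$ arises as the cross term $[[\partial_{K_d},\partial_{H_1}],\partial_{K_a}]=(a+d)x^{a+d-1}\partial_y$ inside the BCH expansion of $\Ad_{K_a}(\Ad_{K_d}(H_1))$.

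Second, and more seriously, your proposed isolation step fails. The degrees lying on the line where the second coordinate equals $-1$ in the Newton polygon of that derivation are $(2d-1,-1)$, $(a+d-1,-1)$ and $(2a-1,-1)$; when $a\neq d$ the middle one sits in the interior of that edge, so it is \emph{not} a vertex, and no choice of one-parameter subgroup $T$ (generic or otherwise) can single it out via Proposition~\ref{principal-part} or Lemma~\ref{non-homog-deriv}(c). The paper's resolution, which your proposal is missing, is a subtraction trick: take the principal part along the whole face, obtaining $\partial_T=-dx^{2d-1}\partial_y-(a+d)x^{a+d-1}\partial_y-ax^{2a-1}\partial_y$; separately obtain $\exp(\K x^{2d-1}\partial_y)$ and $\exp(\K x^{2a-1}\partial_y)$ inside $\overline{G}$ as the (vertex) principal parts of $\Ad_{K_d}(H_1)$ and $\Ad_{K_a}(H_1)$; and then use that all three summands commute (each is of the form $f(x)\partial_y$) to multiply off the two outer exponentials inside the group, leaving exactly $K_{d+a-1}$. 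Without this step the middle term cannot be extracted.
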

\begin{proof}
\par Let us denote by $T$ one-dimensional torus, corresponding to the standard basis vector $e_2\in N.$ Consider the group $L=\Ad_{K_a}(\Ad_{K_d}(H_1))$.
Since $H_1$ is a one-parameter subgroup in $\Aut(\A^2)$, we see that $L$ is also a $\Ga$-subgroup. Thus we have $L = \exp(\K\partial)$ where $\partial$ is some locally nilpotent derivation. %написать в прелиминарис что это так?
In addition, $L \sse G$, so from Proposition~\ref{principal-part} we have $L_T \sse \overline{G}$.
Using Baker-Campbell-Hausdorff formula~(\ref{BCH_form}) we obtain
    $$
    \partial_T=-d x^{2d-1}\frac{\partial}{\partial y}-(a+d)x^{a+d-1}\frac{\partial}{\partial y}-ax^{2a-1}\frac{\partial}{\partial y}.
    $$
    
Similarly, we have 
\begin{align*}
    \Ad_{K_d}(H_1)_T=\exp(\K x^{2d-1}\frac{\partial}{\partial y}) \sse\overline{G} \text{ and } \Ad_{K_a}(H_1)_T=\exp(\K x^{2a-1}\frac{\partial}{\partial y}) \sse\overline{G}. 
\end{align*}

Since all elements of the group ${\exp(\K x^{a+d-1}\frac{\partial}{\partial y})}$ belong to $\langle L, \Ad_{K_a}(H_1)$, $\Ad_{K_d}(H_1) \rangle \sse \overline{G}$, we have $\exp(\K x^{a+d-1}\frac{\partial}{\partial y})\sse\overline{G}$. 
\end{proof}
\begin{lemma}\label{firstPart}
If $d_1,\ldots,d_m \in \Z_{\geqslant 0}$ and $\GCD(d_1-1,\ldots, d_m-1)=1$, then there exists $n_0 \in \Z_{>0}$ such that $K_n \sse \overline{\la H_1,K_{d_1},\ldots,K_{d_m}\ra}$ for all $n > n_0$.
\end{lemma}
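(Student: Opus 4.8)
The plan is to bootstrap from Lemma~\ref{Grisha'sLemma} using an elementary number-theoretic argument. First I would observe that Lemma~\ref{Grisha'sLemma} gives, for any $a, d \in \Z_{>0}$ with $K_a, K_d \sse \overline{\la H_1, K_{d_1}, \ldots, K_{d_m}\ra}$, that $K_{a+d-1}$ also lies in the closure (applied with the larger group $\la H_1, K_d, K_a\ra \sse \overline{\la H_1,K_{d_1},\ldots,K_{d_m}\ra}$, whose closure contains all three). Rewriting in shifted coordinates $a \mapsto a-1$, this says: if $a-1$ and $d-1$ are both realized as "exponents minus one" of subgroups $K_\bullet$ in the closure, then so is $(a-1)+(d-1)$. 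In other words, the set $\mathcal{S} = \{\, n - 1 : K_n \sse \overline{\la H_1,K_{d_1},\ldots,K_{d_m}\ra}\,\} \sse \Z_{\ge -1}$ is closed under addition of its elements that are $\ge 1$ (one has to be slightly careful about the generators $d_i = 0$ giving $-1$, and about $d_i = 1$ which contributes nothing new, but the $d_i \ge 2$ contribute positive elements $d_i - 1$).

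Next I would invoke the standard numerical-semigroup fact: if $\mathcal{T} \sse \Z_{>0}$ is closed under addition and $\GCD(\mathcal{T}) = 1$, then $\mathcal{T}$ contains all sufficiently large integers — that is, $\mathcal{T} \supseteq \{n : n > n_0\}$ for some $n_0$ (the Chicken McNugget / Frobenius bound). Here $\mathcal{T}$ is the sub-semigroup of $\Z_{>0}$ generated by $\{d_i - 1 : d_i \ge 2\}$ together with whatever positive elements Lemma~\ref{Grisha'sLemma} forces in; since by hypothesis $\GCD(d_1 - 1, \ldots, d_m - 1) = 1$ and the $d_i = 1$ terms contribute $0$ (hence are irrelevant to the GCD of the nonzero shifts, as $\GCD$ ignores zero arguments) while $d_i = 0$ contributes $1$, the set of \emph{nonzero} shifts $\{d_i - 1 : d_i \ne 1\}$ still has GCD $1$. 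Wait — I should double-check the edge case: if some $d_i = 0$ then $d_i - 1 = -1$ and $\GCD$ with $-1$ is $1$ automatically, so the hypothesis is met and moreover $K_0 = \la H_1, K_0, \ldots\ra$ already contains a lot; but $K_0$ is the translation $(x,y)\mapsto(x, y+\beta)$, and Lemma~\ref{Grisha'sLemma} with $a = 0$ would give $K_{d-1}$, shifting down. Rather than fuss over $d_i \le 1$, I would simply restrict attention to the $d_i \ge 2$: if \emph{none} of the $d_i$ exceeds $1$ then the GCD condition forces some $d_i = 0$, handled separately (or trivially, as $\la H_1, K_0\ra$ together with a conjugation argument already yields all $K_n$); in the main case at least two of the shifts are positive with overall GCD $1$.

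So the concrete steps are: (1) collect the positive shifts $c_i := d_i - 1$ for those $i$ with $d_i \ge 2$, noting $\GCD$ of all nonzero $c_i$ equals $1$ under the hypothesis (the $d_i \in \{0,1\}$ cases being either vacuous or a short separate argument); (2) note each such $K_{d_i} = K_{c_i + 1} \sse \overline{G}$ trivially since $K_{d_i}$ is a generator; (3) apply Lemma~\ref{Grisha'sLemma} repeatedly — formally, induct on the number of summands — to conclude $K_{c_{i_1} + \cdots + c_{i_k} + 1} \sse \overline{G}$ for every finite multiset of indices, i.e. $K_{s+1} \sse \overline{G}$ for every $s$ in the additive semigroup $\la c_i \ra$; (4) apply the Frobenius/Chicken-McNugget bound to get $n_0$ with $\{s : s > n_0 - 1\} \sse \la c_i\ra$, hence $K_n \sse \overline{G}$ for all $n > n_0$. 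The main obstacle, such as it is, is bookkeeping rather than mathematics: making sure Lemma~\ref{Grisha'sLemma} is applied to a group whose closure is contained in $\overline{\la H_1,K_{d_1},\ldots,K_{d_m}\ra}$ (it is, since adding generators that are already in the closure does not change the closure — one should cite monotonicity of $\overline{(\,\cdot\,)}$), and handling the degenerate exponents $d_i \in \{0, 1\}$ cleanly so the numerical-semigroup reduction is legitimate. Everything else is the one-line semigroup lemma.
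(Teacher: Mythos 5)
Your proposal matches the paper's proof essentially verbatim: iterate Lemma~\ref{Grisha'sLemma} to place $K_{(d_1-1)l_1+\cdots+(d_m-1)l_m+1}$ in the closure for all non-negative integers $l_i$, then invoke the Frobenius/Chicken--McNugget fact that positive integers with $\GCD$ equal to $1$ generate, as a numerical semigroup, every sufficiently large integer. Your extra bookkeeping around $d_i\in\{0,1\}$ is more care than the paper takes (it silently assumes the shifts $d_i-1$ are non-negative), though your parenthetical claim that $\la H_1,K_0\ra$ ``trivially'' yields all $K_n$ is dubious --- that group lies inside the affine group, whose closure it cannot escape --- but this degenerate case is outside what the paper's own argument actually treats.
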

\begin{proof}
From Lemma~\ref{Grisha'sLemma} it follows that for any non-negative integers $l_1,\ldots,l_m$ we have 
$$K_{(d_1-1)l_1+\ldots +(d_m-1)l_m + 1} \sse \overline{\la H_1,K_{d_1},\ldots,K_{d_m}\ra}.
$$
Since $\GCD(d_1-1,\ldots, d_m-1)=1$ there exists $n_0 \in \Z_{>0}$ such that for all $ n > n_0$ we can find  non-negative integers $l_1,\ldots,l_m$ such that $(d_1-1)l_1+\ldots +(d_m-1)l_m = n-1$. This completes the proof.
\end{proof}
\begin{theorem}\label{H1}
    The group $\la H_1,K_{d_1},\ldots,K_{d_m}\ra$ acts on $\A^2\setminus\{0\}$ infinitely transitively if and only if $\GCD(d_1-1,\ldots, d_m-1)=1.$
\end{theorem}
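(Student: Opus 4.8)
The plan is to prove the two implications of the equivalence separately; throughout write $G=\la H_1,K_{d_1},\ldots,K_{d_m}\ra$ and $g=\GCD(d_1-1,\ldots,d_m-1)$.

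Sufficiency of the $\GCD$ condition should be a short assembly of the material already in place. Assuming $g=1$, Lemma~\ref{firstPart} produces an $n_0\in\Z_{>0}$ with $K_n\sse\ol{G}$ for all $n>n_0$; since the closure of a subgroup of $\Aut(\A^2)$ is again a subgroup, this gives $\la H_1,K_{n_0+1}^\infty\ra\sse\ol{G}$. By Lemma~\ref{secondPart}, applied with $t=1$ and $s=n_0+1$, the subgroup $\la H_1,K_{n_0+1}^\infty\ra$ already acts infinitely transitively on $\A^2\setminus\{0\}$, hence so does $\ol{G}$. In particular $\A^2\setminus\{0\}$ is a single $\ol{G}$-orbit; since each generator of $G$ fixes the origin (so that the action on $\A^2\setminus\{0\}$ makes sense) and fixing a point is a closed condition, $\ol{G}$ fixes the origin too, so $\A^2\setminus\{0\}$ is exactly the open orbit $\mc{O}_{\ol{G}}$. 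As $G$ is generated by the connected algebraic subgroups $H_1,K_{d_1},\ldots,K_{d_m}$ it is algebraically generated, and Proposition~\ref{lem:many-points}\,(a),(c) transfers infinite transitivity from $\ol{G}$ on $\mc{O}_{\ol{G}}$ back to $G$ on $\mc{O}_G=\A^2\setminus\{0\}$. The only point needing care is the identification $\mc{O}_{\ol{G}}=\A^2\setminus\{0\}$, which is immediate once a subgroup is known to act transitively there.

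For necessity I would argue by exhibiting a finite symmetry that every element of $G$ must respect. Suppose $g\neq1$ and choose a prime $p\mid g$ (any prime will do if $g=0$, i.e.\ if all $d_i=1$); then $d_i\equiv1\pmod p$ for every $i$. Let $\zeta\in\K$ be a primitive $p$-th root of unity and let $\tau\in\Aut(\A^2)$ be the diagonal automorphism $(x,y)\mapsto(\zeta x,\zeta y)$. A one-line computation shows that $\tau$ commutes with $H_1$ and, because $\zeta^{d_i-1}=1$, with each $K_{d_i}$, hence with all of $G$. Therefore $\varphi(\tau(P))=\tau(\varphi(P))$ for every $\varphi\in G$ and every point $P$, so the image of $\tau(P)$ under $\varphi$ is pinned down by the image of $P$. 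Taking $P=(1,0)$ and $\tau(P)=(\zeta,0)$, which are distinct points of $\A^2\setminus\{0\}$, together with the targets $(1,0)$ and $(2,0)$, which are distinct and satisfy $(2,0)\neq(\zeta,0)$ because a nontrivial root of unity cannot equal $2$ in characteristic zero, we see that no $\varphi\in G$ can send $P\mapsto(1,0)$ and $\tau(P)\mapsto(2,0)$ simultaneously. Hence $G$ is not $2$-transitive on $\A^2\setminus\{0\}$, a fortiori not infinitely transitive.

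I do not expect a serious obstacle in either direction: sufficiency is bookkeeping on top of Lemmas~\ref{secondPart}, \ref{Grisha'sLemma} and~\ref{firstPart} and Proposition~\ref{lem:many-points}, while the heart of necessity is simply noticing the diagonal $\mu_p$-symmetry forced by the congruences $d_i\equiv1\pmod p$; the one thing to get right there is choosing witness points that make the failure of $2$-transitivity visible. (The same computation shows $\ol{G}$ also centralizes $\tau$, so passing to the closure cannot help, in agreement with Theorem~\ref{thm2}, whose lattice condition for the root vectors $(-1,1),(d_1,-1),\ldots,(d_m,-1)$ holds exactly when $g=1$.)
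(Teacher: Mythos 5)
Your proposal is correct and follows essentially the same route as the paper: sufficiency is the same assembly of Lemmas~\ref{firstPart} and~\ref{secondPart} (with Proposition~\ref{lem:many-points} transferring infinite transitivity from $\overline{G}$ back to $G$), and your necessity argument via the diagonal automorphism $\tau=(\zeta x,\zeta y)$ centralizing $G$ is just a reformulation of the paper's $G$-invariant set $S=\{(P,\omega P)\}$, verified by the same congruences $d_i\equiv 1$. Your explicit handling of the degenerate case $g=0$ and of the choice of witness points is a welcome bit of extra care, but not a different method.
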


\begin{proof}
Lemmata~\ref{firstPart} and~\ref{secondPart} constitute sufficiency; to prove necessity we follow \mbox{\cite[Section 5.1]{AKZ}:} let $m=\GCD(d_1-1,\ldots,d_m-1)>1$. Let $\omega$ be a primitive root of unity of degree $m$. Consider the set  
    $$
        S=\{(P,\omega P)\;\vert\; P\in\A^2\}.
    $$
    Since $\omega^{d_i} = \omega$ holds for all $i$, $S$ is a $G$-invariant subset in $\A^2\times\A^2.$ Thus $G$ does not act $2$-transitively on $\A^2\setminus \{0\}$.
\end{proof}

Denote $x^{\underline{k}}=x(x-1)\ldots(x-k+1)$.

\begin{lemma}\label{Aliska'sLemma}
Let $H_c = \exp(\K\partial_{r, \varepsilon})$ and $K_{d_i} = \exp(\K\partial_{\rho, e_i})$ for $i = 1,\ldots,k$. Consider the commutator
\begin{gather*}
{\partial_{f, s}=\left[\,\left[\dots\left[\,\left[\partial_{r,\varepsilon},\, \partial_{\rho, e_1}\right],\, \partial_{\rho, e_2}\right]\ldots\right],\, \partial_{\rho, e_k}\right].}\text{ Then}\\ s = \varepsilon + e_1 + e_2 + \ldots + e_k,\;\text{and } f =(-1)^{k+1}(\langle\rho, \varepsilon\rangle^{\underline{k-1}}\, \langle r,e_1+\dots+e_k\rangle\rho -\langle\rho, \varepsilon\rangle^{\underline{k}}\,r).\end{gather*}
\end{lemma}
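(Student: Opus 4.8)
The plan is to rewrite the iterated commutator so that every bracket is computable on monomials, and then to run a short induction on $k$. Recall from Subsection~\ref{subsec:Dem_roots} that a homogeneous derivation of degree $e\in M$ sends a monomial by $\chi^m\mapsto\langle v,m\rangle\,\chi^{m+e}$ for a suitable $v\in N_\QQ$; write $\partial_{v,e}$ for this operator (extending the notation $\partial_{\rho,e}$ of the excerpt, and most conveniently viewed on the Laurent polynomial ring, where it always makes sense). Here $\partial_{r,\varepsilon}$ and $\partial_{\rho,e_i}$ are the honest polynomial derivations $x_2^{c}\,\partial/\partial x_1$ and $x_1^{d_i}\,\partial/\partial x_2$, and $r=\rho_1$, $\rho=\rho_2$ form a basis of $N_\QQ$. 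The first step is the bracket rule: for all $v,w\in N_\QQ$ and $e,e'\in M$,
$$[\partial_{v,e},\,\partial_{w,e'}]\;=\;\partial_{\langle v,e'\rangle\, w\,-\,\langle w,e\rangle\, v,\;\, e+e'}.$$
This drops out of $\partial_{v,e}\partial_{w,e'}(\chi^m)=\langle w,m\rangle\langle v,m+e'\rangle\,\chi^{m+e+e'}$ and bilinearity of the pairing, the two summands $\langle v,m\rangle\langle w,m\rangle$ cancelling under antisymmetrization. In particular the operators $\partial_{v,e}$ are closed under the bracket, which already accounts for the asserted shape $\partial_{f,s}$ of the iterated commutator (a genuine derivation of $\K[x_1,x_2]$, being a commutator of the honest derivations above).

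Now set $\partial^{(0)}=\partial_{r,\varepsilon}$ and $\partial^{(j)}=[\partial^{(j-1)},\,\partial_{\rho,e_j}]$, so $\partial^{(k)}=\partial_{f,s}$. Writing $\partial^{(j)}=\partial_{v_j,s_j}$, the bracket rule gives $s_j=s_{j-1}+e_j$ and $v_j=\langle v_{j-1},e_j\rangle\,\rho-\langle\rho,s_{j-1}\rangle\, v_{j-1}$, with $v_0=r$, $s_0=\varepsilon$; hence $s=\varepsilon+e_1+\cdots+e_k$ immediately. For the slope vector I invoke the one structural fact at hand, namely that each $e_i$ is a Demazure root attached to $\rho$, so $\langle\rho,e_i\rangle=-1$ and therefore $\langle\rho,s_{j-1}\rangle=\langle\rho,\varepsilon\rangle-(j-1)$. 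Since $v_0=r$ and the recursion only adds multiples of $\rho$, every $v_j$ lies in $\K r+\K\rho$; writing $v_j=b_j\, r+a_j\,\rho$ and using $\langle v_{j-1},e_j\rangle=b_{j-1}\langle r,e_j\rangle-a_{j-1}$, the two coefficients decouple into
$$b_j=-\bigl(\langle\rho,\varepsilon\rangle-j+1\bigr)\,b_{j-1},\qquad a_j=-\bigl(\langle\rho,\varepsilon\rangle-j+2\bigr)\,a_{j-1}+\langle r,e_j\rangle\, b_{j-1},$$
with $b_0=1$, $a_0=0$.

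Finally I solve these by induction on $k$. The first recursion gives $b_k=(-1)^k\,\langle\rho,\varepsilon\rangle^{\underline{k}}$ directly. Feeding this into the second, using the identity $\bigl(\langle\rho,\varepsilon\rangle-j+2\bigr)\langle\rho,\varepsilon\rangle^{\underline{j-2}}=\langle\rho,\varepsilon\rangle^{\underline{j-1}}$ and the base case $a_1=\langle r,e_1\rangle$, yields $a_k=(-1)^{k+1}\,\langle\rho,\varepsilon\rangle^{\underline{k-1}}\sum_{i=1}^{k}\langle r,e_i\rangle=(-1)^{k+1}\,\langle\rho,\varepsilon\rangle^{\underline{k-1}}\,\langle r,e_1+\cdots+e_k\rangle$ by linearity of the pairing. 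Then $f=v_k=b_k\, r+a_k\,\rho=(-1)^{k+1}\bigl(\langle\rho,\varepsilon\rangle^{\underline{k-1}}\langle r,e_1+\cdots+e_k\rangle\,\rho-\langle\rho,\varepsilon\rangle^{\underline{k}}\, r\bigr)$, which is exactly the claimed formula. The argument is elementary throughout; the only place demanding care is the sign-and-falling-factorial bookkeeping, and I expect the induction step for $a_k$ to be the fiddliest — it has to absorb the factor $b_{k-1}$ and reindex the falling factorial correctly — though it is purely mechanical once $b_k$ is in hand.
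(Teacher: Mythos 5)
Your proof is correct and follows essentially the same route as the paper's: both establish the one-step bracket rule $[\partial_{v,e},\partial_{w,e'}]=\partial_{\langle v,e'\rangle w-\langle w,e\rangle v,\;e+e'}$ by evaluating on monomials and then induct on $k$, invoking $\langle\rho,e_i\rangle=-1$ to collapse the products into falling factorials. Your only departure is organizational --- decoupling the recursion into the scalar sequences $a_j,b_j$ along the basis $r,\rho$ instead of substituting the full vector formula for $f_n$ as the paper does --- which tidies the sign and factorial bookkeeping but amounts to the same computation.
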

\begin{proof}
    We prove this result by induction on \(k\). For \(k=1\) the result follows from the direct computation:
    \begin{gather*}
        \left[\partial_{r,\varepsilon},\, \partial_{\rho, e_1}\right](\chi^m)=\pd_{r,\ve}\circ \pd_{\rho,e_1}(\chi^m)-\pd_{\rho,e_1}\circ\pd_{r,\ve}(\chi^m)=\\
        =\pd_{r,\ve}(\la \rho, m\ra \chi^{m+e_1})-\pd_{\rho,e_1}(\la r, m\ra \chi^{m+\ve})=\\
        (\la\rho,m\ra\cdot\la r,m+e_1\ra-\la\rho,m+\ve\ra\cdot\la r,m\ra)\cdot\chi^{m+e_1+\ve}=\\
        =\la \la r,e_1\ra\rho-\la\rho,\ve\ra r, m\ra \chi^{m+e_1+\ve}.
    \end{gather*}
    
    Now assume that the result holds for \(k\le n\). We wish to prove it for \(k=n+1\). For the sake of brevity we denote by \(s_k,f_k\) respectively the degree and the linear part of the~\(k\)-iterated commutator. By definition we have
    \begin{gather}
        \begin{split}
            \pd_{f_{n+1},s_{n+1}}(\chi^m)=\left[\left[\,\left[\dots\left[\,\left[\partial_{r,\varepsilon},\, \partial_{\rho, e_1}\right],\, \partial_{\rho, e_2}\right]\ldots\right]\, \partial_{\rho, e_n}\right],\pd_{\rho,e_{n+1}}\right](\chi^m)=\\
        =[\pd_{f_n,s_n},\pd_{\rho,e_{n+1}}](\chi^m)=\la \la f_n,e_{n+1}\ra \rho-\la \rho, s_n\ra f_n,m\ra \chi^{m+e_{n+1}+s_n}.\label{form:n+1_der}
        \end{split}
    \end{gather}
    By the inductive hypothesis it is clear that \(s_{n+1}=\ve+e_1+\ldots+e_n+e_{n+1}\). To compute~\(f_{n+1}\) we have to expand the linear term of the derivation \(\pd_{f_{n+1},s_{n+1}}\) in formula \eqref{form:n+1_der}. This computation is straightforward:
    \begin{gather*}
       f_{n+1}=(-1)^{n+1}\Bigg\la \la \rho, \ve\ra^{\ul{n-1}}\la r, e_1+\ldots+e_n\ra \rho - \la \rho,\ve\ra^{\ul{n}}r,e_{n+1}\Bigg\ra \cdot \rho +\\
        +(-1)^{n+2}\Bigg\la \rho, \ve+e_1+\ldots+e_n\Bigg\ra \cdot \Bigg(\la \rho, \ve\ra^{\ul{n-1}}\la r, e_1+\ldots+e_n\ra \rho - \la \rho,\ve\ra^{\ul{n}}r\Bigg)=\\
        =(-1)^{n+1}\la \rho, \ve\ra^{\ul{n-1}}\la r, e_1+\ldots+e_n\ra\cdot\la\rho,e_{n+1}\ra\cdot \rho+(-1)^{n}\la \rho,\ve\ra^{\ul{n}}\cdot\la r,e_{n+1}\ra \rho+\\
        +(-1)^n\Bigg\la \rho, \ve+e_1+\ldots+e_n\Bigg\ra\cdot \la \rho, \ve\ra^{\ul{n-1}}\la r, e_1+\ldots+e_n\ra \rho +\\+ (-1)^{n+1}\Bigg\la \rho, \ve+e_1+\ldots+e_n\Bigg\ra\cdot \la \rho,\ve\ra^{\ul{n}}r
    \end{gather*}
    Note that \(\la \rho,e_k\ra =-1\) for all \(k,\) thus we have
    \begin{gather*}
       f_{n+1}=(-1)^{n+1}\la\rho,\ve\ra^{\ul{n+1}}r+(-1)^{n+1}\big(\la\rho,\ve\ra^{\ul{n}}\la r,e_{n+1}\ra\rho+\la\rho,\ve\ra^{\ul{n}}\la r,e_1+\ldots+e_n\ra\big)=\\
        =(-1)^{n+2}(\la\rho,\ve\ra^{\ul{n}}\la r,e_1+\ldots+e_n+e_{n+1}\ra\rho-\la\rho,\ve\ra^{\ul{n+1}}r).
    \end{gather*}
    We have proven the inductive step and hence the statement of the lemma.
\end{proof}
The following lemma is a specialization of Proposition \ref{principal-part} to the case of the affine plane~\(\A^2\) and subgroup generated by root subgroups.
\begin{lemma}\label{lemma:closures}
   Consider a subgroup \(G\ss\Aut(\A^2)\) normalized by one-dimensional torus with the linear part \(\rho=(0,1).\)
   Let \(L=\exp(\K\pd)\) be \(\Ga\)-subgroup of \(G.\) Here \({\pd\in\LND(\K[x_1,x_2]).}\) Let \(\pd_x\) be the principal part of \(\pd\) corresponding to a one-parametric subgroup with the linear part \(\rho=(0,1).\) Since~\(\pd_x\) is a summand of \(\pd\) there is a decomposition:
   \begin{equation}\label{eq:sum}
       \pd_x=\sum \pd_{\rho,e_i},\quad \text{where } e_i=(a_i,0).
   \end{equation}
   Here each \(\pd_{\rho,e_i}\) is homogeneous and belongs to the decomposition of \(\pd\) into the sum of homogeneous derivations.
   \par Then each of \(\pd_{\rho,e_i}\) is locally nilpotent and each of \(\Ga\) subgroups \(\exp(\K\pd_{\rho,e_i})\) is a subgroup of \(\ol{G}.\)
\end{lemma}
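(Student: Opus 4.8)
The plan is to combine two ingredients. First, Proposition~\ref{principal-part} puts the entire principal part $\partial_x$ into $\overline{G}$ as a $\Ga$-subgroup $\exp(\K\partial_x)$; second, since all the summands $\partial_{\rho,e_i}$ of $\partial_x$ sit on one supporting line of $N(\partial)$ (equivalently, share the same $T$-weight) they cannot be separated by $T$ itself, so I would instead conjugate $\exp(\K\partial_x)$ by elements of the \emph{second} coordinate direction of the torus $\T$ and extract the individual $\partial_{\rho,e_i}$ by a Vandermonde argument. Local nilpotence of each $\partial_{\rho,e_i}$ costs nothing: by construction each summand in \eqref{eq:sum} is, up to a nonzero scalar, the standard homogeneous locally nilpotent derivation $x^{a_i}\frac{\partial}{\partial y}$ attached to a Demazure root (subsection~\ref{subsec:Dem_roots}), with the exponents $a_i$ pairwise distinct, and $\exp\!\big(\K\, x^{a_i}\frac{\partial}{\partial y}\big)=K_{a_i}$.

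In detail, I would first apply Proposition~\ref{principal-part} to $L=\exp(\K\partial)\sse G$: the group $G$ is normalized by the one-parameter torus $T$ with linear part $\rho=(0,1)$, and the $T$-principal part $\partial_x$ is again locally nilpotent because it is $\partial_\tau$ for the face $\tau$ of $N(\partial)$ cut out by the relevant supporting line (Lemma~\ref{non-homog-deriv}(c)); hence $\exp(\K\partial_x)\sse\overline{G}$. Write $\partial_x=\sum_{i=1}^{k}c_i\,\partial_{\rho,e_i}$ with $c_i\in\K^\times$ and $\partial_{\rho,e_i}=x^{a_i}\frac{\partial}{\partial y}$ as above. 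These derivations commute pairwise, so $w\mapsto\exp\!\big(\sum_i w_i\partial_{\rho,e_i}\big)$ is an injective homomorphism $(\K^{k},+)\hookrightarrow\Aut(\A^2)$; let $A$ denote its image. Since $A$ contains every $\exp(\K\partial_{\rho,e_i})$, it is enough to show $A\sse\overline{G}$.

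For the separation I use that $\overline{G}$ is normalized by the \emph{full} torus $\T$ — this is the situation of the lemma, in which $G$ is generated by root subgroups, each of which is normalized by $\T$; the one-dimensional torus named in the statement only serves to pick out which principal part $\partial_x$ we take. For $s\in\K^\times$, conjugation of $\exp(t\partial_x)$ by $(s,1)\in\T$ sends $x^{a_i}\frac{\partial}{\partial y}$ to $s^{-a_i}x^{a_i}\frac{\partial}{\partial y}$, so it carries $\exp(\K\partial_x)\sse\overline{G}$ onto $\exp\!\big(\K\sum_i c_is^{-a_i}\partial_{\rho,e_i}\big)\sse\overline{G}$. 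Consequently $\overline{G}$ contains the subgroup of $A$ corresponding to the $\K$-linear span $W\sse\K^{k}$ of the vectors $v_s=(c_1s^{-a_1},\dots,c_ks^{-a_k})$, $s\in\K^\times$ (for each fixed $s$ we already obtain the whole line $\K v_s$, and the $\partial_{\rho,e_i}$ commute, so this subgroup is exactly $\exp$ of a linear subspace). Finally $W=\K^{k}$: a vector $\ell=(\ell_1,\dots,\ell_k)$ orthogonal to $W$ would force the Laurent polynomial $\sum_i\ell_ic_it^{-a_i}$ — which is nonzero unless all $\ell_i=0$, since the $a_i$ are distinct — to vanish on the infinite set $\K^\times$, which is impossible. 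Hence $A\sse\overline{G}$, and in particular $\exp(\K\partial_{\rho,e_i})\sse\overline{G}$ for every $i$.

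The only genuinely delicate point is the use of normality under the full torus $\T$ rather than just the one-dimensional torus appearing in the statement: $T$-conjugation alone merely rescales $\partial_x$ globally, since its summands share a $T$-weight, so one really does need a torus direction acting nontrivially on the $x$-coordinate to disentangle the $\partial_{\rho,e_i}$, and this is supplied by the root-subgroup hypothesis on $G$. The remaining steps — recognising the homogeneous components of $\partial_x$ as the derivations $x^{a_i}\frac{\partial}{\partial y}$, and the Laurent-polynomial (Vandermonde) argument for $W=\K^{k}$ — are routine.
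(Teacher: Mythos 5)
Your argument is correct, but it takes a genuinely different route from the paper's. The paper extracts the extremal homogeneous components $\pd_{\rho,e_{\min}}$ and $\pd_{\rho,e_{\max}}$ by applying Proposition~\ref{principal-part} again, to \emph{tilted} one-parameter subgroups of $\T$ whose supporting lines meet the Newton polygon of $\pd_x$ only at those two vertices; since the summands commute, it then subtracts these off inside $\ol{G}$ and inducts on the number of summands. You invoke Proposition~\ref{principal-part} only once, to place $\exp(\K\pd_x)$ in $\ol{G}$, and then separate all components at once by conjugating with $(s,1)\in\T$ and running a Vandermonde/Laurent-polynomial argument in the commutative group $\exp(W)$; local nilpotence of each summand is indeed free, since a homogeneous derivation of degree $(a_i,-1)$ is forced to be a scalar multiple of $x^{a_i}\frac{\pd}{\pd y}$ (note the paper's ``$e_i=(a_i,0)$'' is a typo for $(a_i,-1)$). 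Both proofs are valid, and both need more than the hypothesis literally stated: normalization by the single one-parameter subgroup with $\rho=(0,1)$ cannot suffice (the group $\exp\bigl(\K(x^{a_1}+x^{a_2})\frac{\pd}{\pd y}\bigr)$ is normalized by it and is already closed, yet contains neither $K_{a_1}$ nor $K_{a_2}$), so one must use that in the intended application $G$ is generated by root subgroups and hence normalized by all of $\T$. The paper uses this silently when it forms principal parts for the tilted subgroups; you flag it explicitly, which is a point in your favour. What each approach buys: yours trades the induction for a one-shot linear-algebra separation and is arguably cleaner; the paper's only ever invokes principal parts at vertices of the Newton polygon, the technique it reuses throughout and which would survive in settings where generic torus conjugation is unavailable.
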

\begin{proof}
    Let \(e_{\max},e_{\min}\) be vectors \(e_i\) with minimal and maximal values of \(a_i\) in sum \eqref{eq:sum} respectively. 
    First note that Proposition \ref{principal-part} implies that \(\pd_x,\pd_{\rho,e_{\max}},\) and \(\pd_{\rho,e_{\min}}\) are locally nilpotent and subgroups \(\exp(\K\pd_{\rho}),\exp(\K\pd_{\rho,e_{\max}})\), \(\exp(\K\pd_{\rho,e_{\min}})\) are in \(\ol{G}.\) Now note that all \(\pd_{\rho,e_i}\) correspond to the same root \(\rho\) and thus commute. Thus the \(\Ga\)-subgroup \[H'=\exp(\K(\pd_x-\pd_{\rho,e_{\max}}-\pd_{\rho,e_{\min}}))\] is also  in \(\ol{G}.\) Now it remains to note that the length of sum \eqref{eq:sum} has decreased and hence the statement of lemma follows by induction applied to \(\ol{G}.\)
\end{proof}
\begin{proposition}\label{lemma:commutators}
    If the vector \(e=(a,-1)\) lies in the cone~\({\Z_{\ge 0}\la e_1,\ldots,e_k,\ve_1,\ldots,\ve_l\ra,}\)
    then the group \(K_e\) lies in the closure of the group
    \[
        G=\la K_{e_1},\ldots,K_{e_k},H_{\ve_1},\ldots,H_{\ve_l}\ra.
    \]
\end{proposition}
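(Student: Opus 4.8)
The plan is to argue entirely in terms of Demazure roots and to keep track of which root subgroups lie in $\ol G$. Recall that $K_b=\exp(\K x^b\pd_y)=\exp(\K\pd_{\rho,(b,-1)})$ for $\rho=(0,1)$ and $H_c=\exp(\K y^c\pd_x)=\exp(\K\pd_{r,(-1,c)})$ for $r=(1,0)$, so in the statement $e_i=(d_i,-1)$ and $\ve_j=(-1,c_j)$, and we may assume $a\ge 0$ (as is implicit in writing $K_e$). Root subgroups are normalized by the whole torus $\T$, hence $G$, and therefore $\ol G$, is normalized by the one-parameter subgroup $T$ with $\rho=(0,1)$. Set $S=\{\,(b,-1):b\ge 0,\ K_{(b,-1)}\sse\ol G\,\}$. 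Then $e_1,\dots,e_k\in S$, and it suffices to prove $e\in S$.

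\textbf{Step 1 (a one-$H$ absorption rule).} I would first show: if $v_1,\dots,v_{c_j+1}\in S$ (repetitions allowed) and the sum $b^\ast$ of their first coordinates is nonzero, then $s:=\ve_j+v_1+\dots+v_{c_j+1}\in S$ (note $s=(b^\ast-1,-1)$, a legitimate root since $b^\ast\ge 1$). Conjugate $H_{\ve_j}=\exp(\K\pd_{r,\ve_j})$ by elements $k_t\in K_{v_t}\sse\ol G$ with parameter $s_t$. The subgroup $\Ad_{k_1\cdots k_{c_j+1}}(H_{\ve_j})=\exp(\K\Phi)$, with $\Phi=\Ad_{k_1\cdots k_{c_j+1}}(\pd_{r,\ve_j})$, is a conjugate of a root subgroup, hence a $\Ga$-subgroup of $\ol G$, so $\exp(\K\Phi)\sse\ol G$. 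The $\pd_{\rho,v_t}$ share the ray $\rho$, hence commute; so do the operators $\ad_{\pd_{\rho,v_t}}$, and $\Phi=\exp\!\bigl(\textstyle\sum_t s_t\ad_{\pd_{\rho,v_t}}\bigr)(\pd_{r,\ve_j})$. A short computation using Lemma~\ref{Aliska'sLemma} (with $c_j+1$ in the role of $k$) shows that the homogeneous component of $\Phi$ of degree $s$ is a nonzero scalar multiple of $\pd_{\rho,s}$: every multi-exponent in the $s_t$ reaching degree $s$ involves exactly $c_j+1$ brackets, so $\la\rho,\ve_j\ra^{\underline{c_j+1}}=0$ kills the $r$-part of the iterated commutator, while its $\rho$-part has coefficient $\pm c_j!\,b^\ast\neq 0$ for each such multi-exponent — hence these contributions cannot cancel (take all $s_t=1$). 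Since $\la s,\rho\ra=-1$ is the minimal value of $\la\,\cdot\,,\rho\ra$ on the Newton polytope of $\Phi$, the summand $\pd_{\rho,s}$ occurs in the $\rho$-principal part of $\Phi$; applying Lemma~\ref{lemma:closures} to $\ol G$ (which equals its own closure) gives $K_s\sse\ol G$, i.e.\ $s\in S$.

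\textbf{Step 2 (combinatorial induction).} Fix a representation $e=(a,-1)=\sum_i n_ie_i+\sum_j m_j\ve_j$ with $n_i,m_j\in\Z_{\ge 0}$, and induct on $M=\sum_j m_j$. If $M=0$, comparing second coordinates gives $\sum_i n_i=1$, so $e=e_i\in S$. If $M\ge 1$, choose $j_0$ with $m_{j_0}\ge 1$ and $c_{j_0}$ minimal among $\{\,c_j:m_j\ge 1\,\}$; then $\sum_i n_i=1+\sum_j m_jc_j\ge 1+c_{j_0}$, so more than $c_{j_0}$ of the generators $e_i$ are present with multiplicity. Delete the $c_{j_0}$ of them with smallest first coordinates and put $u=e-\ve_{j_0}-(\text{these }c_{j_0}\text{ generators})$. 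Then $u$ has second coordinate $-1$, is a nonnegative combination of the $e_i,\ve_j$ using only $M-1$ of the $\ve$'s, and an averaging estimate together with the minimality of $c_{j_0}$ shows that the deleted first coordinates sum to at most $a+1$, so $u$ has nonnegative first coordinate; thus $u\in S$ by the induction hypothesis. Now $e=\ve_{j_0}+(\text{the }c_{j_0}\text{ deleted }e_i)+u$ realizes $e$ as an instance of Step~1 — there are $c_{j_0}+1$ summands, all in $S$, with first coordinates summing to $a+1\neq 0$ — so $e\in S$, i.e.\ $K_e\sse\ol G$.

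\textbf{Main obstacle.} The delicate point is the combinatorial estimate in Step~2: one must guarantee that, for a suitable $j_0$, precisely $c_{j_0}$ of the available generators $e_i$ can be removed so that $u$ still has nonnegative first coordinate. This reduces to the inequality $c_{j_0}(a+M)\le(a+1)\bigl(1+\sum_jm_jc_j\bigr)$, which holds because $c_{j_0}\le c_j$ whenever $m_j\ge 1$ and $\sum_jm_jc_j\ge m_{j_0}c_{j_0}\ge c_{j_0}$. The various ``nonzero first coordinate'' conditions required by Step~1 are then automatic, since each such sum equals (the first coordinate of the relevant target root) $+\,1\ge 1$.
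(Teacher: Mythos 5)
Your proof is correct, and it rests on the same two pillars as the paper's own argument: the commutator formula of Lemma~\ref{Aliska'sLemma}, used to see that bracketing $\pd_{r,\ve_j}$ with exactly $c_j+1$ roots of $K$-type kills the $r$-part (since $\la\rho,\ve_j\ra^{\underline{c_j+1}}=0$) and leaves $\pm c_j!\,b^\ast\,\pd_{\rho,s}$ with $b^\ast$ the sum of the first coordinates, together with Lemma~\ref{lemma:closures}, used to extract that homogeneous component of the conjugated LND as a $\Ga$-subgroup of $\ol{G}$; your observation that all multisets of size $c_j+1$ with the same degree contribute the same sign, so that taking all parameters equal to $1$ prevents cancellation, matches the paper's treatment of the "meeting paths". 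Where you genuinely differ is the combinatorial bookkeeping. The paper fixes a representation $e=\sum v_i+\sum\mu_j$ and builds partial sums $\phi_1,\dots,\phi_M=e$ bottom-up, absorbing at each stage one $\mu_i$ and the next batch of $v$'s \emph{in the given order}; it does not verify that each intermediate $\phi_i$ has nonnegative first coordinate (so that $K_{\phi_i}$ is a genuine root subgroup and the pairing $\la r,\,\cdot\,\ra$ in the commutator is nonzero), and with an unlucky ordering of the $v_i$ this can fail. Your top-down induction on the number $M$ of $H$-type generators, with the averaging inequality $c_{j_0}(a+M)\le(a+1)\bigl(1+\sum_j m_jc_j\bigr)$ ensuring that $\ve_{j_0}$ can always be peeled off together with the $c_{j_0}$ generators of smallest first coordinate while staying in the quadrant, supplies exactly the reordering needed and makes every relevant pairing equal to $(\text{first coordinate of the target})+1\ge 1$. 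So your route is not merely a restyling: it makes explicit (and repairs) a point the paper's write-up leaves implicit, at the cost of a slightly heavier induction.
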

\begin{proof}
    By Lemma \ref{lemma:closures} it is sufficient to prove that the nilpotent derivation corresponding to the group \(K_e\) can be expressed as a homogeneous summand in a nilpotent derivation of the form:
    \begin{equation}\label{eq:ad_der}
        \widetilde{\pd}=\Ad_{G_{i_1}}(\Ad_{G_{i_2}}(\ldots \Ad_{G_{i_{n-1}}}(\pd_{i_{n}})\ldots)).
    \end{equation}
    Here \(G_{i_j}\) are \(\Ga\)-subgroups from the collection \(\{K_{e_s},H_{\ve_t}\}\) and the derivation \(\pd_{i_n}\) is 
    the LND corresponding to \(\Ga\)-subgroup \(G_{i_n}.\) 
    Below we construct derivation of form \eqref{eq:ad_der}. 
    \par Assume that the non-negative linear combination expressing \(e\) in terms of \(e_i,\ve_j\) is
    \begin{equation}\label{form:comb}
        e=\sum_{i=1}^N v_i+\sum_{j=1}^M \mu_j,\quad v_i\in\{e_s\}^k_{s=1},\; \mu_j\in\{\ve_t\}^l_{t=1}.
    \end{equation}
    Let us denote by \(\phi_1\) the sum:
    \[
        \phi_1=\sum_{i=1}^{\la \mu_1,\rho\ra+1} v_i+\mu_1.
    \]
    Let us also denote by \(n_1\) the number \(\la \mu_1,\rho\ra+1.\)
    Now we define \(f_i,n_i\) for \(i>1\) via formulas:
    \[
        \phi_i=\phi_{i-1}+\sum_{j=n_{i-1}+1}^{n_i} v_j + \mu_i,\quad n_i=n_{i-1}+\la\mu_i,\rho\ra + 2.
    \]
     Thus we have a sequence of subsums \(\{\phi_i\}_{i=1}^M\) in \eqref{form:comb} with \(\phi_M=e.\) 
    Conceptually \(\phi_i\) are constructed by picking one vector \(\mu_j\) to increase the second coordinate of \( \phi_{i-1}\) and then adding vectors \(v_i\) to gradually decrease the second coordinate and obtain the next vector~\(\phi_{i}.\) Note that all vectors  \(\phi_i\) have the form \((a_i,-1).\)
    \par For notational convenience we will denote \(\pd_{\rho,e}\) simply by \(\pd_e\) and similarly \(\pd_{r,\ve}\) by \(\pd_\ve.\) We also put \(\phi_0 = e_1\) and \(n_0=1\).  Now for all \(i\) from \(1\) to \(M\) we have identities
    \begin{gather}\label{eq:summand}
        \left[\left[\left[\ldots\left[\left[\left[\pd_{\mu_{i}},\pd_{\phi_{i-1}}\right],\pd_{v_{n_{i-1}+1}}\right],\pd_{v_{n_{i-1}+2}}\right]\ldots\right],\pd_{v_{n_i-1}}\right],\pd_{v_{n_i}}\right]=\pd_{\phi_{i}}.
    \end{gather}
    Let us further investigate the derivation \(\pd_{\phi_i}.\) By lemma \ref{Aliska'sLemma} it is a derivation of the form~\(\pd_{f,\phi_i},\) where \(f\) is a vector:
    \[
        f=(-1)^{\la \mu_i,\rho\ra + 1}(\langle\rho, \mu_i\rangle^{\underline{\la \mu_i,\rho\ra}}\, \langle r,\phi_i+\nu_{n_{i-1}+1}+\dots+\nu_{n_i}\rangle\rho -\langle\rho, \mu_i\rangle^{\underline{\la \mu_i,\rho\ra + 1}}\,r).
    \]
    Recall, that \(x^{\ul{i}}=x(x-1)\ldots(x-i+1)\), so \(f\) has much simpler form:
    \[f=(-1)^{\la \mu_i,\rho\ra + 1}\langle\rho, \mu_i\rangle!\, \langle r,\phi_i+\nu_{n_{i-1}+1}+\dots+\nu_{n_i}\rangle\rho.\]
    Hence \(f\neq 0\) and so \(\pd_{\phi_i}\) is non-zero.
    We now consider an LND
    \begin{equation}\label{eq:Ads}
        \Ad_{\exp\left(\K \pd_{v_{n_i}}\right)}\left(\ldots\left(\Ad_{\exp\left(\K \pd_{v_{n_{i-1}+1}}\right)}\left(\Ad_{\exp\left(\K \pd_{\phi_{i-1}}\right)}\left(\pd_{\mu_{i}}\right)\right)\right)\ldots\right).
    \end{equation}
    By Baker--Campbell--Hausdorff formula \eqref{eq:BCH_formula} it is clear that in \eqref{eq:Ads} there is a summand~\(\vk_i\pd_{\phi_i}.\) with some integer coefficient which comes from the number of different ways one obtains \(\phi_i\) as a sum of \(\nu_j\)'s. Note, however, that the formula for \(f\) from \ref{Aliska'sLemma} depends only on the number \(e_i's\) involved and sum of degrees, but not on specific order. The number of \(e_i's\) in turn depends only on a specific meet point of paths. Thus at each point two different paths meet they correspond to the same derivation. Hence,~\(\pd_{\phi_i}\)~is a summand in \eqref{eq:Ads} with non-zero coefficient.
    \par Clearly, \(\exp(\pd_{\phi_0})\) is a \(\Ga\)-subgroup in \(\ol{G}.\) Now by Lemma \ref{lemma:closures} if \(\phi_i\) defines a \(\Ga\)-subgroup in \(\ol{G}\) then \(\phi_{i+1}\) also does. Thus \(\phi_M=e\) defines a \(\Ga\)-subgroup in \(\ol{G}.\)
\end{proof}
\begin{theorem}\label{thm2}
    The group $G=\la H_{c_1},\ldots, H_{c_s},K_{d_1},\ldots,K_{d_m}\ra$ acts on $\A^2\setminus\{0\}$ infinitely transitively if and only if
    $$\Z\left\la \begin{pmatrix}-1 & c_1\end{pmatrix},\ldots,\begin{pmatrix}-1 & c_s\end{pmatrix},\begin{pmatrix}d_1& -1\end{pmatrix},\ldots,\begin{pmatrix}d_m & -1\end{pmatrix}\right\ra=M.$$
\end{theorem}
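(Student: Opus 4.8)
The plan is to prove both implications. Identify $M$ with $\Z^2$ via the fixed basis; then the Demazure root of $H_c$ is $\varepsilon_c=(-1,c)$ and that of $K_d$ is $e_d=(d,-1)$, and I write $\Lambda\sse\Z^2$ and $C\sse\Z^2$ for, respectively, the lattice and the monoid generated by $\varepsilon_{c_1},\dots,\varepsilon_{c_s},e_{d_1},\dots,e_{d_m}$. I may assume all $c_i,d_j\ge 1$: if some exponent is $0$ then $G$ contains a translation, and such degenerate cases are treated separately. The arithmetic core of the proof is the equivalence
\[
\Lambda=\Z^2\quad\Longleftrightarrow\quad \GCD_{i,j}(c_jd_i-1)=1 .
\]
Indeed, $\Lambda=\Z^2$ exactly when the gcd of the $2\times 2$ minors of the matrix whose rows are the roots equals $1$, and those minors are $d_i-d_{i'}$, $c_j-c_{j'}$ and $c_jd_i-1$. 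Writing $\delta=\GCD_{i,j}(c_jd_i-1)$, the congruence $c_1d_i-1\equiv-1\pmod{d_i}$ gives $\GCD(\delta,d_i)=1$, and likewise $\GCD(\delta,c_j)=1$; then $\delta\mid(c_j-c_{j'})d_i$ forces $\delta\mid c_j-c_{j'}$, and similarly $\delta\mid d_i-d_{i'}$, so $\delta$ divides every minor and hence equals their gcd.

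Assume first $\delta=1$. For all $i,j$ the vector $c_j\,e_{d_i}+\varepsilon_{c_j}=(c_jd_i-1,\,0)$ lies in $C$, and the positive numbers among the $c_jd_i-1$ have gcd $\delta=1$, so the numerical semigroup they generate contains all integers $\ge F$ for some $F$; thus $(g,0)\in C$ whenever $g\ge F$. Since $e_{d_1}\in C$ and $C$ is closed under addition, $(n,-1)\in C$ for every $n\ge s_0:=d_1+F$, and Proposition~\ref{lemma:commutators} yields $K_n\sse\overline{G}$ for all $n\ge s_0$, i.e. $K_{s_0}^\infty\sse\overline{G}$. As $H_{c_1}\in G\sse\overline{G}$ as well, we get $\la H_{c_1},K_{s_0}^\infty\ra\sse\overline{G}$, and this subgroup acts on $\A^2\setminus\{0\}$ infinitely transitively by Lemma~\ref{secondPart}; hence so does $\overline{G}$. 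Every generator of $G$ fixes the origin, so $\A^2\setminus\{0\}$ is a single $G$-orbit, hence a single $\overline{G}$-orbit by Proposition~\ref{lem:many-points}(a), and therefore Proposition~\ref{lem:many-points}(c) transfers infinite transitivity from $\overline{G}$ to $G$.

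Assume now $\Lambda\subsetneq\Z^2$. Each generator of $G$ sends a monomial $\chi^m$ to a $\K$-linear combination of monomials $\chi^{m'}$ with $m'-m$ an integer multiple of the relevant root, hence $m'\equiv m\pmod{\Lambda}$; so $G$ preserves the $\Z^2/\Lambda$-grading of $\K[x_1,x_2]$ and therefore commutes with the nontrivial diagonalizable subgroup $\Gamma=\Hom(\Z^2/\Lambda,\K^\times)\sse\T$ acting through that grading. Fix $\gamma\in\Gamma$ with $\gamma\ne\mathrm{id}$. Then $S=\{(P,\gamma\cdot P):P\in\A^2\setminus\{0\}\}$ is a closed, two-dimensional, $G$-invariant subset of $(\A^2\setminus\{0\})^2$; it is proper, yet since $\gamma$ moves some point it also contains pairs of distinct points. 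A group preserving such a set cannot be $2$-transitive, so $G$ is not infinitely transitive on $\A^2\setminus\{0\}$.

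I expect the main obstacle to be the combinatorial step in the sufficiency half: the membership $(n,-1)\in C_{\QQ}$ (the rational cone on the roots) for $n\gg 0$ is immediate, but Proposition~\ref{lemma:commutators} needs membership in the integral monoid $C$, and this is where the trick of extracting the axis points $(c_jd_i-1,0)\in C$ and feeding their gcd into the Frobenius bound is essential. Verifying the displayed gcd identity — the only place that calls for genuine care with divisibility — is what links this combinatorics to the lattice condition $\Lambda=\Z^2$, and hence to the statement of the theorem.
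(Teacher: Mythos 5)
Your proof is correct, and for sufficiency it follows the same skeleton as the paper (Proposition~\ref{lemma:commutators} to place enough $K_n$ in $\ol{G}$, Lemma~\ref{secondPart} for $\la H_t,K_s^\infty\ra$, Proposition~\ref{lem:many-points} to descend from $\ol{G}$ to $G$); the difference is that you make explicit the arithmetic bridge the paper leaves tacit, namely why the lattice condition forces $(n,-1)$ into the \emph{integral} monoid of roots for all large $n$. Your route — identifying $\Z\la\text{roots}\ra=\Z^2$ with $\GCD_{i,j}(c_jd_i-1)=1$ via the $2\times2$ minors, extracting the axis vectors $(c_jd_i-1,0)=c_j e_{d_i}+\ve_{c_j}$, and applying the Frobenius bound — is exactly the missing combinatorial content, and it is carried out correctly (including the divisibility argument showing $\delta$ divides the minors $c_j-c_{j'}$ and $d_i-d_{i'}$). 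For necessity the paper picks a primitive root of unity of degree equal to the index and verifies invariance of an explicit pair set by hand; your version, using a nontrivial character of $M/\Lambda$ and the fact that $G$ preserves the $M/\Lambda$-grading and hence commutes with the corresponding diagonalizable subgroup of $\T$, is cleaner, avoids the congruence checks, and also covers uniformly the case where $\Lambda$ has rank one (infinite index), where the paper's choice of $\omega$ does not literally parse. Two minor points to tidy: the degenerate case $c_i=0$ or $d_j=0$ should simply be excluded at the outset, since then $G$ does not preserve $\A^2\setminus\{0\}$ and the statement is vacuous rather than "treated separately"; and the sentence deducing that $\A^2\setminus\{0\}$ is a single $G$-orbit should rest on the transitivity of $\la H_{c_1},K_{s_0}^\infty\ra\sse\ol{G}$ together with Proposition~\ref{lem:many-points}(a), not merely on the generators fixing the origin.
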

\begin{proof}
    Lemma \ref{Aliska'sLemma} and Proposition \ref{lemma:commutators} constitute sufficiency. To prove necessity we follow~\mbox{\cite[\S 5.1]{AKZ}}. Put~\(m=[M:\Z\la e_1,\ldots,e_k,\ve_1,\ldots,\ve_l\ra]\) and let \(\omega\) be a primitive root of unity of degree \(m.\) Consider a set
    \[
        S=\left\{\left(\left(z,w\right),\left(\omega z,\omega^{x(e_1)} w\right)\right)\;\bigg\vert\; (z,w) \in \A^2\right\}. 
    \]
    We now show that \(S\) is invariant. Denote by \(x(v)\) the first coordinate of the vector \(v\) and denote by \(y(v)\) the second coordinate. First we show that:
    \begin{enumerate}[(i)]
        \item \(\omega^{x(e_i)\cdot y(\ve_j)}=\omega\) for all \(i=1,\ldots, k,\; j=1,\ldots,l;\)\label{invset:one}
        \item \(\omega^{x(e_1)}=\omega^{x(e_i)}\) for all \(i=1,\ldots,k.\)\label{invset:two}
    \end{enumerate}
    To see \ref{invset:one} observe that \(x(e_i)\cdot y(\ve_j)-1\) equals the index of the subgroup \(\Z\la e_i,\ve_j\ra\) in \(\Z^2\) and thus is divisible by \(m.\) To prove \ref{invset:two} one notes that \(m\) is a common divisor of \(x(e_1)\cdot y(\ve_1)-1\) and \(x(e_i)\cdot y(\ve_1)-1\) by the previous observation. Thus \(m\) is a common divisor of \((x(e_1)-x(e_i))\cdot y(\ve_1)\) and \(x(e_i)\cdot y(\ve_1)-1.\) Since \(y(\ve_1)\) is coprime with \(x(e_i)\cdot y(\ve_1)-1\) it follows that \(m\) divides \(x(e_1)-x(e_i).\)
    \par Now note that \ref{invset:one} and \ref{invset:two} imply invariance of the set \(S\). Indeed, consider an action of \(H_{\ve_j}\) and \(K_{e_i}\) on \(S.\) We have:
    \begin{gather*}
          H_{\ve_j}(\alpha).S=\left\{\left(\left(z+\alpha w^{y(\ve_j)},w\right),\left(\omega z+\alpha\omega^{x(e_1) y(\ve_j)}w^{y(\ve_j)},\omega^{x(e_1)} w\right)\right)\;\bigg\vert\; (z,w) \in \A^2\right\}=\\=\left\{\left(\left(z+\alpha w^{y(\ve_j)},w\right),\left(\omega z+\alpha\omega w^{y(\ve_j)},\omega^{x(e_1)} w\right)\right)\;\bigg\vert\; (z,w) \in \A^2\right\}=S;\\
          K_{e_i}(\beta).S=\left\{\left(\left(z,w+\beta z^{x(e_i)}\right),\left(\omega z,\omega^{x(e_1)} w+\beta \omega^{x(e_i)}z^{x(e_i)}\right)\right)\;\bigg\vert\; (z,w) \in \A^2\right\}=\\
          =\left\{\left(\left(z,w+\beta w^{x(e_i)}\right),\left(\omega z,\omega^{x(e_1)} w+\beta \omega^{x(e_1)}z^{x(e_i)}\right)\right)\;\bigg\vert\; (z,w) \in \A^2\right\}=S.
    \end{gather*}
    Thus the set \(S\) is indeed invariant and the group \(G\) does not act even \(2\)-transitively on~$\A^2\setminus \{0\}.$
\end{proof}
\section{Examples and Questions}\label{sec:questions}

Theorem~\ref{H1} provides plethora of examples of ind-groups acting infinitely transitively on $\A^2\setminus\{0\}$.
Note that although $\overline{\la H_1,K_{d_1},\ldots,K_{d_m}\ra}$ and $\overline{\la H_1,K_{c_1},\ldots,K_{c_n}\ra}$ can coincide for
different sets of numbers $(d_1,\ldots,d_m)$ and $(c_1,\ldots,c_n)$, they do not always coincide. For example, if $c=\min\{c_1,\ldots,c_n\} \neq \min\{d_1,\ldots,d_m\} = d$, we have $$G_d = \overline{\la H_1,K_{d_1},\ldots,K_{d_m}\ra} \neq \overline{\la H_1,K_{c_1},\ldots,K_{c_n}\ra} = G_c.$$
Indeed, if $c > d$ we have $K_d \sse G_d$ and $K_d \not\sse G_c$, in the case with $c < d$ we have $K_c \sse G_c$ and $K_c \not\sse G_d$.
Thus we obtain a countable family of different ind-groups acting infinitely transitively on~$\A^2\setminus\{0\}$. 
\begin{example}
There is only one subgroup generated by $H_1$ and one other root group that acts infinitely transitively on $\A^2\setminus\{0\}$, it is the group $\langle H_1, K_2\rangle$. However, there is a lot of subgroups generated by $H_1$ and two other root groups that act infinitely transitively on its open orbit $\A^2\setminus\{0\}$. Indeed, for any positive integer $k \ge 3$ the group $\langle H_1, K_{k}, K_{k+1}\rangle$ acts on $\A^2\setminus\{0\}$ infinitely transitively, since $\gcd(k-1, k) = 1$. Notice that for $k_1 \neq k_2$ we have
$$\overline{\langle H_1, K_{k_1}, K_{k_1+1}\rangle} \neq \overline{\langle H_1, K_{k_2}, K_{k_2+1}\rangle}. $$
\end{example}
\begin{example}
The proof of Theorem~\ref{H1} also gives us a simple method to prove that some groups do not act $2$-transitively on $\A^2\setminus\{0\}$. For example, for any number of odd numbers $d_1,\ldots,d_m$ the group $\la H_1,K_{d_1},\ldots,K_{d_m} \ra$ does not act $2$-transitively on $\A^2\setminus\{0\}$. Indeed, since $2$ divides $\gcd(d_1-1,\ldots,d_m-1)$, we have $\gcd(d_1-1,\ldots,d_m-1) > 1$, and now we can use the proof of Theorem~\ref{H1}.
\end{example}
\begin{example} Here we provide an in-detail application of the algorithm described in the proof of proposition \ref{lemma:commutators}. Assume that we are given a collection of four \(\Ga\) groups \(K_2, K_3, K_4, H_2.\) We want to show that the group \(K_12\) belongs to \(\ol{\la K_2, K_3, K_4, H_2\ra}.\) As was shown in the course of proving theorem \(2\) it is necessary and sufficient to show that the vector \((12,-1)\) defines an LND, which is a summand in an LND of the form \(\Ad_{G_{i_1}}(\Ad_{G_{i_2}}(\ldots \Ad_{G_{i_{n-1}}}(\pd_{i_{n}})\ldots)).\) Here \(G_i's\) are groups from the collection \(\{K_2,K_3,K_4,H_2\}\) and \(\pd_{i_n}\) is one of the derivations corresponding to these groups. Clearly, the vector \((12,-1)\) can be expressed as a positive linear combination of the vectors corresponding to \(K\)'s and \(H_2.\) Such a combination is given by the sum:
\begin{gather*}
    (12,-1)=\nu_1+\nu_2+\nu_3+\nu_4+\mu_1,\\ \mu_1=(-1,3), \nu_1=(2,-1),\;\nu_2=(3,-1),\;\nu_3=(4,-1), \nu_4=(4,-1).
\end{gather*}

According to the algorithm of proposition \ref{lemma:commutators} we should consider the sum:
\[
    \phi_1=\sum_{i=1}^{\la (-1,3),(0,1)\ra+1}\nu_i+\mu_1=(12,-1).
\]
Now we compute action of \(\Ad\)-operators on \(\pd_{\nu_1}.\) This can be seen in the figure \ref{fig:comm}. We get an LND \[
D=\Ad_{K_4}(\Ad_{K_4}(\Ad_{K_3}(\Ad_{K_2}(\pd_{H_2})))).
\]
By construction \(\exp(\K D)\sse \ol{\la K_2, K_3, K_4, H_2\ra}.\)
It is clear, that \(D\) satisfies conditions of lemma \ref{lemma:closures}. Thus it \(\pd_{K_{12}}\) defines a \(\Ga\)-subgroup in \(\ol{\la K_2, K_3, K_4, H_2\ra}\) and we are done.
\begin{figure}[H]
    \centering
    \input{diagram_app}
    \caption{The whole figure represents an entire LND \[\Ad_{K_4}(\Ad_{K_3}(\Ad_{K_2}(\Ad_{K_1}(\pd_{H_2})))).\] The solid picewise linear path represents a commutator expression of the form \eqref{eq:ad_der} which is equal to derivation \(\pd_{(12,-1)}.\)}
    \label{fig:comm}
\end{figure}
\end{example}

Now we formulate a generalization of our results, which will be the topic of our subsequent work.
Let $X$ be an affine toric variety. Recall that its coordinate ring admits a decomposition
$$
\K[X] = \bigoplus_{m\in M\cap\, \sigma^\vee}\K\chi^m,
$$
where $\sigma^\vee$ is a cone in $M$, corresponding to $X$, see \cite{Co}, \cite{Fu}. %исправить ссылки в этом абзаце
If $\partial_{e}$ is a locally nilpotent derivation of $\K[X]$, then $\Ad(\K\partial_e)$ is a $\Ga$-subgroup of $\Aut(X)$, see \cite[\S 2]{LiC1}.
First we aim to generalize Theorem~\ref{thm2} for an affine toric surface.
\begin{conjecture}\label{toricsurface}
Let $X$ be an affine toric surface. Let \(\sigma\) be a cone corresponding to~\(X.\) Then the group~${G =\! \langle\exp(\K\partial_{e_1}),\ldots,\exp(\K\partial_{e_k})\rangle}$~acts on its open orbit infinitely transitively if and only if \[\Z\langle e_1,\ldots,e_k\rangle = \Z\la\sigma\ra.\]
\end{conjecture}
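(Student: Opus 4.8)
The plan is to follow the architecture of the proof of Theorem~\ref{thm2} on a general affine toric surface $X$. We may assume the defining cone $\sigma$ is two-dimensional and strongly convex (the remaining cases are degenerate), so that $X\cong\A^2/C$ for a finite cyclic group $C$ and the lattice on the right of the criterion is all of $M$; accordingly we read the criterion as $\Z\la e_1,\dots,e_k\ra=M$. The two combinatorial engines of the planar proof --- the iterated commutator formula of Lemma~\ref{Aliska'sLemma} and the closure statements of Proposition~\ref{principal-part}, Lemma~\ref{lemma:closures} and Proposition~\ref{lemma:commutators} --- are already phrased intrinsically in terms of $M$, $N$ and the pairings with the ray generators $\rho_1,\rho_2$ of $\sigma$, so they carry over with essentially no change. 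What must be rebuilt is the concluding \emph{geometric} step, where one exploits that the monomials $\chi^m$, $m\in\sigma^\vee\cap M$, no longer give global coordinates on $X$ but only $C$-semiinvariant coordinates on the smooth cover $\A^2$.

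\emph{Necessity} follows the template of \cite[\S 5.1]{AKZ}. If every $e_i$ is attached to one and the same ray $\rho_i$, then $\chi^{m_0}$, with $m_0$ the primitive generator of the ray $\sigma^\vee\cap\rho_i^\perp$, is killed by all $\pd_{\rho_i,e_j}$ and hence is a nonconstant $G$-invariant, so $G$ is not even transitive on its open orbit $\mc O_G$; since this can happen with $\Z\la e_i\ra=M$, the precise statement must include the hypothesis that roots attached to both rays of $\sigma$ occur, which we assume. Granting that, suppose $\Z\la e_1,\dots,e_k\ra\neq M$. If the $e_i$ do not span $M_\QQ$, pick $0\neq n_0\in N$ with $\la n_0,e_i\ra=0$ for all $i$ and set $H=\lambda_{n_0}(\Gm)\sse\T$; otherwise $L=\Z\la e_i\ra$ has finite index $\ell>1$ in $M$ and set $H=\Hom(M/L,\K^\times)\sse\T$. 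In either case every $h\in H$ normalizes each root subgroup of $G$ trivially (because $\chi^{e_i}(h)=1$), hence centralizes $G$, and choosing a nontrivial $g_0\in H$, which acts nontrivially on $\mc O_G$, makes the graph $\{(p,g_0.p):p\in\mc O_G\}$ a proper, off-diagonal, $G$-invariant subset of $\mc O_G\times\mc O_G$; thus $G$ is not $2$-transitive.

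\emph{Sufficiency.} Assume $\Z\la e_1,\dots,e_k\ra=M$ with roots from both rays present. First one enlarges $G$ inside $\ol G$ by a staircase version of Proposition~\ref{lemma:commutators}: for a Demazure root $e$ lying in the semigroup $\Z_{\ge 0}\la e_1,\dots,e_k\ra$ one orders a representation $e=\sum v_i+\sum\mu_j$ ($v_i$ of the same type as $e$, $\mu_j$ of the opposite type) so that all partial sums $\phi_0,\phi_1,\dots,\phi_M=e$ are again Demazure roots; Lemma~\ref{Aliska'sLemma} identifies the corresponding iterated commutator with the nonzero root derivation $\pd_{\rho,\phi_i}$, and Lemma~\ref{lemma:closures} places each $\exp(\K\pd_{\rho,\phi_i})$, in particular $\exp(\K\pd_{\rho,e})$, inside $\ol G$. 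A two-dimensional numerical-semigroup argument then shows that, since $\Z\la e_i\ra=M$ and both types are present, $\Z_{\ge 0}\la e_1,\dots,e_k\ra$ contains all but finitely many Demazure roots of each of the two types. Hence $\ol G$ contains a ``deep'' analogue of the group $\la H_t,K_s^\infty\ra$ of Lemma~\ref{secondPart}, and it remains to prove the toric counterpart of that lemma: a subgroup of $\Aut(X)$ containing one root subgroup attached to $\rho_1$ together with all sufficiently deep root subgroups attached to $\rho_2$ (and symmetrically) acts infinitely transitively on $\mc O_G$. Granting this, Proposition~\ref{lem:many-points}(c) passes infinite transitivity from $\ol G$ down to $G$.

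\emph{The main obstacle} lies in the last two steps. Organizing the staircase is the lighter one: on $\A^2$ every admissible root has ``$H$-degree'' at least $1$, so the partial sums are automatically genuine Demazure roots and the intermediate commutators automatically nonzero, whereas for a general $\sigma$ one meets roots $\mu$ with $\la\rho_i,\mu\ra=0$ and must sequence the up-steps and down-steps with care, relying on the order-independence of the terminal derivation noted in Proposition~\ref{lemma:commutators}. The heavier obstacle is the toric analogue of Lemma~\ref{secondPart} on the singular surface $X$: its Steps~1--4 prescribe coordinates of points by Hermite interpolation subject to divisibility constraints $x^s\mid R(x)$, and on $X$ this must be recast inside the $M$-grading of $\K[X]$. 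I expect the cleanest route is to lift the chosen root subgroups of $X$ to $C$-normalized automorphisms of $\A^2$ and run the argument of Theorem~\ref{thm2} $C$-equivariantly; identifying which Demazure roots of $\A^2$ correspond to the chosen roots of $X$ under this lift, and verifying that enough of them survive the equivariance constraint, is where the principal difficulty should concentrate.
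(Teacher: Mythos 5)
First, a point of comparison: the paper does not prove this statement. It is posed as a conjecture and explicitly deferred to later work; the only strategy the authors hint at is to resolve the singular surface by equivariant blow-ups and reduce to a gluing of copies of \(\A^2\), which is a different route from your presentation of \(X\) as a cyclic quotient \(\A^2/C\) with a \(C\)-equivariant lift of the planar argument. So your proposal must stand on its own, and, as you acknowledge, it is a programme rather than a proof. Your necessity half is sound and is the right abstraction of the argument in Theorem~\ref{thm2}: a proper sublattice \(\Z\la e_1,\dots,e_k\ra\subsetneq M\) yields a nontrivial torus element \(g_0\) with \(\chi^{e_i}(g_0)=1\) for all \(i\), hence centralizing \(G\), and its graph obstructs \(2\)-transitivity. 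Your observation that the statement must additionally require roots attached to both rays (otherwise \(\chi^{m_0}\), with \(m_0\in\rho_i^\perp\cap\sigma^\vee\cap M\), is a nonconstant invariant) is correct and exposes a real defect of the conjecture as written.

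The sufficiency half, however, contains a step that fails as stated. The claim that \(\Z\la e_1,\dots,e_k\ra=M\) together with the presence of both types forces \(\Z_{\ge0}\la e_1,\dots,e_k\ra\) to contain all but finitely many Demazure roots of each type is false: with one generator \(u\) of the first type (\(\la\rho_2,u\ra=p\)) and one generator \(v\) of the second (\(\la\rho_1,v\ra=q\)), the semigroup \(\Z_{\ge0}\la u,v\ra\) contains infinitely many roots of the first type only when \(pq>1\). Already on \(X=\A^2\) the pair \(e_1=(-1,0)\), \(e_2=(0,-1)\) (i.e.\ \(H_0\) and \(K_0\)) generates \(M=\Z^2\) with one root on each ray, yet the semigroup meets the root set only in \(\{e_1,e_2\}\); the group generated is the translation group, which is transitive on its open orbit \(\A^2\) but not \(2\)-transitive. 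The pair \(H_0,K_2\) behaves similarly. These are counterexamples to the conjecture as literally stated, and your necessity argument cannot see them because the lattice condition holds; so the statement needs a further hypothesis excluding such near-corner roots, and your semigroup step needs a correct quantitative formulation rather than the blanket assertion. Separately, the toric analogue of Lemma~\ref{secondPart} --- the interpolation argument on the singular surface, equivalently the determination of which planar roots survive the \(C\)-equivariance constraint --- is the analytic core of any proof along these lines and is left entirely open in your proposal.
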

Here the problem can be solved directly by resolving a singular toric surface with a series of equivariant blow-ups thus obtaining a gluing of several copies of \(\A^2.\) A sharper statement generalizing Conjecture \ref{toricsurface} is the following:
\begin{conjecture}
    Let $X$ be an affine toric variety. Let \(\sigma\) be a cone corresponding to~\(X.\) The group~${G =\! \langle\exp(\K\partial_{e_1}),\ldots,\exp(\K\partial_{e_k})\rangle}$~act on its open orbit infinitely transitively if and only if \[\Z\langle e_1,\ldots,e_k\rangle = \Z\la\sigma\ra.\]
\end{conjecture}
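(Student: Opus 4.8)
I would follow the two-step pattern of Theorems~\ref{H1} and~\ref{thm2}: prove necessity by an explicit invariant set, and prove sufficiency by first passing to the closure $\ol{G}$, which by Proposition~\ref{lem:many-points} has the same orbits and the same degree of transitivity as $G$, and then showing that $\ol{G}$ already contains a group known to act infinitely transitively on the open orbit. Write $M_0=\Z\la e_1,\ldots,e_k\ra$; when $\sigma$ is full-dimensional one has $\Z\la\sigma\ra=M$, and in general $\Z\la\sigma\ra$ is the sublattice of $M$ left after splitting off the torus factor of $X$, on which every $\pd_{e_i}$ restricts, reducing that case to the previous one. \emph{Necessity:} suppose $M_0\subsetneq M$ and let $\eta$ be a nontrivial character of the finite group $M/M_0$, viewed as a torsion element of $\T=\Hom(M,\K^\times)$, so $\eta(e_i)=1$ for all $i$. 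Since $\T$ normalizes each root subgroup and $\eta\,\pd_{\rho,e_i}\,\eta^{-1}=\eta(e_i)\,\pd_{\rho,e_i}=\pd_{\rho,e_i}$, the element $\eta$ centralizes $G$, so $S=\{(p,\eta p):p\in X\}$ is invariant under the diagonal $G$-action on $X\times X$. Its intersection with $\mc{O}\times\mc{O}$, where $\mc{O}$ is the open $G$-orbit (which is $\T$-stable, being the unique open orbit, and is not contained in the proper closed set $\mathrm{Fix}(\eta)$), is a proper, non-diagonal, $G$-stable subset, so $G$ is not $2$-transitive on $\mc{O}$ --- the verbatim analogue of the obstructions in Theorems~\ref{H1} and~\ref{thm2}.

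\emph{Sufficiency, Step 1: combinatorial enlargement of $\ol{G}$.} The commutator identity of Lemma~\ref{Aliska'sLemma} uses only the pairing $N\times M\to\Z$ and carries over to $\K[M\cap\sigma^\vee]$ unchanged; moreover two root derivations attached to the same ray $\rho$ commute, since $\la\rho,e\ra=\la\rho,e'\ra=-1$. Feeding this into the $\Ad$-chain bookkeeping of Proposition~\ref{lemma:commutators}, the principal-part Proposition~\ref{principal-part} (choosing a one-parameter subgroup $T\ss\T$ whose linear form isolates, on the relevant Newton polytope, exactly the degree one wishes to extract), and the same-ray extraction of Lemma~\ref{lemma:closures}, one obtains: for every Demazure root $e$ lying in the sub-semigroup $\Z_{\ge0}\la e_1,\ldots,e_k\ra$, the subgroup $\exp(\K\pd_{\rho,e})$ lies in $\ol{G}$. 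Since $M_0=M$ and the roots attached to a fixed ray form a shifted sub-semigroup that becomes cofinite after discarding finitely many boundary lattice points, $\ol{G}$ is thus forced to contain, for some ray $\rho$, all but finitely many $\rho$-roots --- the analogue of $K_s^\infty$ --- together with at least one root attached to a second ray.

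\emph{Sufficiency, Step 2: from many roots to infinite transitivity.} As suggested in the paper for Conjecture~\ref{toricsurface}, I would reduce to the smooth case. Choose a smooth toric refinement $\pi\colon X'\to X$, so that $X'$ is covered by charts $U_\tau\cong\A^n$; the $\rho$-root subgroups produced in Step~1 whose degree lies deep enough inside $\tau^\vee$ extend to regular $\Ga$-subgroups of $U_\tau$, where their degrees still generate $M$ and still include a second ray, so by the $\A^n$-analogue of Theorem~\ref{thm2} --- proved by the same template, with an $\A^n$-version of Lemma~\ref{secondPart} obtained from multivariate Hermite interpolation inside $\K[M\cap\sigma^\vee]$ --- they act infinitely transitively on the open orbit of each chart. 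Since $\pi$ is an isomorphism over the big torus, the charts cover the open orbit, and orbits of algebraically generated groups are locally closed \cite[Proposition 1.3]{AFKKZ}, these local statements glue into infinite transitivity of $\ol{G}$, hence of $G$, on the open orbit of $X$.

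\textbf{Main obstacle.} The delicate part is Step~2. It presupposes the $n$-dimensional analogue of Theorem~\ref{thm2}, whose sufficiency half needs a robust multivariate replacement of Lemma~\ref{secondPart}: the scalar Hermite interpolation used there must be carried out in the monomial algebra $\K[M\cap\sigma^\vee]$ while respecting divisibility by $\chi^{e}$ for the ray in use. Moreover, root subgroups of $X$ need not extend to a chosen resolution $X'$, and root subgroups living on a single chart $U_\tau$ need not descend to $X$, so the passage among $X$, $X'$ and the charts must keep precise track of exactly which subgroups are available where; this compatibility, rather than any single computation, is what I expect to cost the most effort. Step~1, by contrast, should be essentially mechanical once the commutator and $\Ad$ bookkeeping of Lemma~\ref{Aliska'sLemma} and Proposition~\ref{lemma:commutators} is transcribed to $n$ variables.
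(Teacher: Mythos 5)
The statement you are proving is the second conjecture of the paper: the authors give no proof of it and explicitly defer it to future work (``the proof should rely on a general formalism that we plan to develop in our future papers''), so there is no argument in the paper to compare yours against. Judged on its own terms, your necessity half is essentially complete and is the right generalization of the invariant-set arguments in Theorems~\ref{H1} and~\ref{thm2}: a character $\eta$ of $\T$ trivial on $M_0$ but not on $M$ centralizes every root subgroup $\exp(\K\pd_{\rho,e_i})$, so the graph of $\eta$ is $G$-stable and $2$-transitivity fails. (Two small points: $M/M_0$ need not be finite, so say ``a nontrivial element of $\Hom(M/M_0,\K^\times)$'' rather than ``of the finite group''; and you should spell out why the graph of $\eta$ meets the configuration space of distinct pairs in the open orbit, i.e.\ why $\eta$ is not the identity there.)

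The sufficiency half, however, has two genuine gaps beyond the one you flag. First, Step~1 is not ``essentially mechanical'': in Proposition~\ref{lemma:commutators} the whole point of the bookkeeping is that every intermediate vector $\phi_i$ is itself a Demazure root (of the form $(a_i,-1)$), which in $\A^2$ is easy because there are exactly two rays and every partial sum automatically pairs correctly with both. For a cone with many rays, a partial sum of roots attached to different rays will generally fail conditions (1)--(2) of the Demazure root definition for every ray, so the intermediate homogeneous derivations need not be locally nilpotent and Lemma~\ref{lemma:closures} does not apply to them; you need an ordering argument showing the target root can be reached through a chain of genuine roots, and that is the heart of the problem, not an afterthought. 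Second, Step~2 does not close: infinite transitivity of the restricted group on the open orbit of each smooth chart $U_\tau$ gives you $m$-transitivity only for tuples of points lying in a single chart's orbit, and the open $G$-orbit of $X$ (the analogue of $\A^2\setminus\{0\}$) is in general strictly larger than the big torus and is not contained in any one chart; ``these local statements glue'' needs an actual mechanism for moving configurations between charts, which the cited local closedness of orbits does not supply. On top of this, the $\A^n$-analogue of Theorem~\ref{thm2} that Step~2 invokes is itself unproven and is of roughly the same difficulty as the conjecture. So what you have is a credible programme matching the authors' own stated strategy in outline, with a correct necessity direction, but not a proof.
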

However here, the proof should rely on a general formalism that we plan to develop in our future papers. We end this section with the following 
\begin{question}
	Let \(X\sse Y\) be a toric subvariety of a toric variety \(Y.\) Assume that \(G\) is a group of automorphisms of \(X\) generated by root subgroups. When is there an automorphism group \(\wt{G}\) of \(Y\) generated by \(G\) and some other root subgroups such that \(\wt{G}\) acts infinitely transitively on \(Y\)?
\end{question}

\end{document}